\documentclass[preprint,12pt]{elsarticle}



\usepackage{graphicx}

\usepackage{amssymb}





\journal{Journal of Differential Equations}

\usepackage{amsmath}
\newtheorem{theorem}{Theorem}[section]
\newtheorem{lemma}{Lemma}[section]
\newtheorem{proposition}{Proposition}[section]
\newtheorem{corr}{Corollary}[section]
\newtheorem{rem}{Remark}[section]
\newtheorem{defn}{Definition}[section]

\newcommand{\rr}{\mathbb R}
\newcommand{\bN}{\mathbb N}
\newcommand{\bR}{\mathbb R}
\newcommand{\Int}{\mathop{\mathrm{Int}}\nolimits}

\newcommand{\Fr}{\mathop{\mathrm{Fr}}\nolimits}
\newcommand{\grad}{\mathop{\mathrm{grad}}\nolimits}
\newcommand{\diam}{\mathop{\mathrm{diam}}\nolimits}
\newcommand{\Cl}[1]{\mathop{\overline{#1}}\nolimits}
\renewcommand{\emptyset}{\varnothing}

\newcommand{\myvectorfield}{\overrightarrow{f}}
\if\csname proof\endcsname\relax
\newenvironment{proof}{\noindent{\bf Proof.\,}}{\hfill $\Box$\par\medskip}
\fi

\begin{document}

\begin{frontmatter}



\title{Discrete conditions of Lyapunov stability\tnoteref{support1}}
\tnotetext[support1]{The research was partially supported by DKNII (M/377-2012) and DFFD (F40.1/009).}


\author[Imath]{E.~Polulyakh}
\ead{polulyah@imath.kiev.ua}
\author[Imath]{V.~Sharko}
\ead{sharko@imath.kiev.ua}
\author[Imath]{I.~Vlasenko}
\ead{vlasenko@imath.kiev.ua}

\address[Imath]{Inst. of Math., NAS of Ukraine, Kiev}

\begin{abstract}
We address the classic
problem of stability and asymptotic stability in the
sense of Lyapunov of the equilibrium point of autonomic differential
equations using discrete approach. This new approach includes a
consideration of a family of hypersurfaces instead of the Lyapunov
functions, and conditions on the right part of the differential
equation instead of conditions on a Lyapunov function along
trajectories of the equation.

In this paper we generalize results of~\cite{SharkoYu2005,SharkoYu2010}.
\end{abstract}

\begin{keyword}
lyapunov stability \sep discrete conditions

\MSC[2008] 93D05
\end{keyword}

\end{frontmatter}



\section {Introduction.}

Consider a system of differential equations
\begin{equation}\label{eq_system_1}
d{\bf x}/dt = \myvectorfield({\bf x}), \;\;\;\; x(0) = x_0,
\end{equation}
which is defined in a neighbourhood $G$ of $x_0$ from $\bR^n$ and such that $x_0$ is its equilibrium point.

Suppose that $\myvectorfield({\bf x})$ is a $C^1$-smooth function.
Denote by $x_p(t)$ the solution of the system~\eqref{eq_system_1} where
$p$ is a point such that $x_p(0)=p$.

\begin{defn}\label{defn_lse}
The equilibrium $x_o$ of the above system is said to be \emph{Lyapunov stable}, if, for every $\epsilon > 0$, there exists $\delta = \delta(\epsilon) > 0$ such that, if $\|x_p(0)-x_0\| < \delta$, then $\|x_p(t)-x_o\| < \epsilon$, for every $t \geq 0$.
%
\end{defn}

The classic method to prove that an equilibrium of the system~\eqref{eq_system_1} is Lyapunov stable
is to build a Lyapunov function for that system, \cite{RushAbetsLalua80}.

We present a new practical method of proving the Lyapunov stability of a equilibrium point
using a sequence of nested hypersurfaces.

A Lyapunov function always exists
in the case when the equilibrium is asymptotically stable,
see~\cite{Wilson67}, or in the case of orbital stability, see~\cite{Sharkovsky70}.
However, it is not the case
when the equilibrium is Lyapunov stable but not asymptotically stable.


The paper~\cite{Polulyakh2012} presents an example of a dynamical
system such that its critical point is Lyapunov stable but
no Lyapunov function exist in a neighbourhood of that critical point.
Still, the method of sequences of nested hypersurfaces presented in
this paper works for the system in~\cite{Polulyakh2012}.

Sequences of nested hypersurfaces can be naturally viewed
as a generalization of Lyapunov functions. A Lyapunov funcfion naturally provides
a continuous foliation of its level surfaces and scalar product with its gradient vector field.
A countable subsequence of level surfaces naturally act as a
sequence of nested hypersurfaces in proposed method.
To further provide a link between Lyapunov functions and sequences of nested hypersurfaces
we define $L$-functions which act as generators of sequences of nested hypersurfaces
and use them to prove the existence theorem for the sequences of nested hypersurfaces
and study stability of critical points of gradient systems.

The approach that uses a discrete sequence of nested hypersurfaces instead of a Lyapunov function
was introduced in the papers~\cite{SharkoYu2005,SharkoYu2010}.
Our article generalizes the results of these papers.

Authors want to express their gratitude to A.~N. Sharkovsky for his
valuable remarks and to S.~I. Maksimenko for his help with the article.

\section{Sequences of converging nested hypersurfaces.}

Denote by $\rr^n$ an $n$-dimensional Euclidian space. Let $\rho$ be the standard metrics on this space. For a bounded set $A \subset \bR^n$ we write
\[
\diam(A) = \sup_{\mathbf{x}, \mathbf{y} \in A} \rho(\mathbf{x}, \mathbf{y}) \,.
\]

\begin{defn}\label{defn_surf}
Let $H^{n-1} \subset \bR^n$ be a connected closed hypersurface (smooth compact submanifold of dimension $n-1$ which has empty boundary).
Let us say that $H^{n-1}$ {\bf bounds} a point $p$ in $\bR^n$ if $p \notin H^{n-1}$ and any path $\gamma$  from $p$
to $\mathbf{x} \in \bR^n$ intersects $H^{n-1}$ when $\rho(p, \mathbf{x}) > \diam(H^{n-1})$.
\end{defn}

Note that a connected closed hypersurface in the Euclidian space is always oriented and splits the space on two components. One of them is bounded, and the other is not \cite{two_components}. Let us call the bounded component of the complement the \emph{internal component}. It follows that a hypersurface has two different normal vector fields of unit length, one of which is directed towards the internal component, and the other is directed towards the other component.

\begin{defn}
\label{defn_nested_seq}
We call a sequence of connected closed hypersurfaces $H^{n-1}_i$ \textbf{nested} if
$H^{n-1}_{i+1}$ is contained in the internal component of the complement $\bR^n \setminus H^{n-1}_{i}$
for every $i \in \bN$.
\end{defn}

Let $H^{n-1} \subset \bR^n$ be a hypersurface that bounds a point $p$ in $\bR^n$.
Denote by
\[
d (p, H^{n-1}) = \max_{y\in H^{n-1}} \left \{||p-y||\right\}
\]
a Hausdorff distance between a point $p$ and $H^{n-1}$.

\begin{defn}
\label{defn_surf_family_converge}
A sequence of nested hypersurfaces $\{H^{n-1}_i\}$ such that each $H^{n-1}_i$ bounds a point $p$ is said \textbf{to converge to} $p$ in $\bR^n$ if $d (p, H^{n-1}_i)\to 0$ as $i \to \infty$.
\end{defn}

Let us consider an autonomous system of differential equations~\eqref{eq_system_1}.
For the sake of convenience choose new coordinates that make the equilibrium point $x_0$ the origin.

By $\{{\bf H}_i^{n-1}\}$ denote a sequence of nested hypersurfaces that converge to origin.
Let $\vec{N_i}({\bf x})$ be the vector fields of unit length on each
${\bf H}_i^{n-1}$ such that vectors of $\vec{N_i}({\bf x})$ are
orthogonal to ${\bf H}_i^{n-1}$ and direct towards the internal component.
Naturally, the system~\eqref{eq_system_1} generates the smooth vector field $\myvectorfield({\bf x})$
on each $\{{\bf H}_i^{n-1}\}$.
Denote by $S_i({\bf x})$ the scalar product $<\vec{N_i}({\bf x}), \myvectorfield({\bf x})>$.
The function $S_i({\bf x})$ is defined for each hypersurface ${\bf H}_i^{n-1}$ and
shows how integral trajectories of the system~\eqref{eq_system_1} intersect ${\bf H}_i^{n-1}$.

We prove here the following theorem.
\begin{theorem}\label{th:lyapunov_from_ge0}
If there exists a sequence $\{{\bf H}_i^{n-1}\}$ such that $\forall i$
we have $ S_i({\bf x}) \ge 0$
then the origin is stable in the sense of Lyapunov for the system~\eqref{eq_system_1}.
\end{theorem}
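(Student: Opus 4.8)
The plan is to show that the condition $S_i(\mathbf{x}) \ge 0$ on each hypersurface forces every integral trajectory that starts inside the internal component of $\mathbf{H}_i^{n-1}$ to remain inside it for all positive time; combined with convergence of the sequence to the origin, this yields Lyapunov stability directly from the definition. Fix $\epsilon > 0$. Since the sequence converges to the origin, I first choose an index $i$ with $d(0, \mathbf{H}_i^{n-1}) < \epsilon$, so that the closed region $\overline{K_i}$ bounded by $\mathbf{H}_i^{n-1}$ (its internal component together with the hypersurface) lies in the ball $B(0,\epsilon)$. Then I pick $\delta > 0$ small enough that $B(0,\delta)$ is contained in the internal component of $\mathbf{H}_i^{n-1}$; such a $\delta$ exists because the internal component is open and contains the origin. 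The claim will then be that any trajectory $x_p(t)$ with $\|p\| < \delta$ stays in $\overline{K_i}$ for all $t \ge 0$, which gives $\|x_p(t)\| < \epsilon$ and closes the argument.

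The heart of the proof is this invariance claim, which is where I would do the real work. Suppose for contradiction that the trajectory leaves $\overline{K_i}$; by continuity of $x_p(t)$ and compactness there is a first time $t^\ast > 0$ at which $x_p(t^\ast) \in \mathbf{H}_i^{n-1}$, with $x_p(t)$ in the internal component for $t \in [0, t^\ast)$. Near $t^\ast$ the trajectory is crossing from the internal side to the external side, so the derivative of the "signed distance to $\mathbf{H}_i^{n-1}$" — or more cleanly, the inner product of $\dot{x}_p(t^\ast) = \myvectorfield(x_p(t^\ast))$ with the outward unit normal — should be nonnegative, i.e. $S_i(x_p(t^\ast)) \le 0$ when $S_i$ is defined via the *inward* normal $\vec{N_i}$. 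But the hypothesis says $S_i \ge 0$ everywhere on $\mathbf{H}_i^{n-1}$. In the strict case this is an immediate contradiction; the delicate point is the boundary case $S_i(x_p(t^\ast)) = 0$, i.e. the vector field is tangent to the hypersurface at the exit point, which does not by itself prevent the trajectory from leaving.

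To handle the tangency case I would argue more carefully with a local picture: near the exit point $q = x_p(t^\ast)$ choose a local coordinate in which $\mathbf{H}_i^{n-1}$ is a piece of the hyperplane $\{x_n = 0\}$ with the internal component corresponding to $x_n > 0$, and let $h(t) = (x_p(t))_n$ be the last coordinate of the trajectory. Then $h(t) > 0$ on $[0,t^\ast)$, $h(t^\ast) = 0$, so necessarily $\dot h(t^\ast) \le 0$; but $\dot h(t^\ast) = -S_i(q) \le 0$ is consistent with $S_i(q) \ge 0$ only if $S_i(q) = 0$ and $\dot h(t^\ast) = 0$. One then needs that the trajectory cannot actually cross: this uses that $S_i \ge 0$ on the *whole* hypersurface, so that at every point of $\mathbf{H}_i^{n-1}$ the field points inward or is tangent — hence $\mathbf{H}_i^{n-1}$ is a "weakly invariant barrier" and the forward flow through points of the internal component cannot reach the external component. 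Making this last step rigorous is the main obstacle: one can invoke a Nagumo-type tangency/viability argument, or avoid it by applying the first-exit-time argument to the open external component $U = \bR^n \setminus \overline{K_i}$ and showing directly that $x_p(t)$ cannot enter $U$, since entering $U$ would require $h$ to become strictly negative, forcing $\dot h < 0$ at some parameter value where $h = 0$, i.e. $S_i > 0$ fails... wait, it is consistent; so the clean route is the Nagumo viability theorem applied to the closed set $\overline{K_i}$, whose defining inequality $\myvectorfield$ satisfies the subtangentiality condition precisely because $S_i \ge 0$. I would state this as a lemma and cite the standard viability result, then conclude that $x_p([0,\infty)) \subseteq \overline{K_i} \subseteq B(0,\epsilon)$, completing the proof.
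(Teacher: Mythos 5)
Your proposal is correct in outline and its skeleton coincides with the paper's: reduce the theorem to forward invariance of the closed region $\overline{K_i}$ bounded by ${\bf H}_i^{n-1}$, then choose $i$ with $d(0,{\bf H}_i^{n-1})<\epsilon$ and $\delta$ so small that the $\delta$-ball about the origin lies in the internal component, and conclude stability from the convergence of the hypersurfaces. Where you genuinely diverge is in how the invariance is established. You correctly see that the naive first-exit argument only yields tangency ($S_i=0$) at the touching point and does not by itself exclude grazing-and-crossing, and you close this gap by appealing to Nagumo's viability theorem: for a $C^1$ boundary the Bouligand subtangentiality condition at a boundary point is exactly $S_i({\bf x})\ge 0$, and since $\overrightarrow{f}$ is $C^1$, uniqueness of solutions upgrades viability to genuine forward invariance of $\overline{K_i}$; this is a complete and legitimate route, provided you state the tangent-cone computation and the citation precisely. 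The paper instead proves the invariance statement from scratch (its Lemmas~3.1 and~3.2): it encloses a hypothetical escaping trajectory in a flow box, uses Sard's theorem to show that the tangency points of the hypersurface with the flow project to a Lebesgue-null subset of the entry cross-section, and then picks a nearby trajectory that starts inside $W$, leaves $K$, and meets ${\bf H}^{n-1}$ only transversally, so that at its exit crossing $S<0$, a contradiction. In effect you outsource to a classical theorem precisely the step that constitutes the paper's main technical content, which makes your argument shorter but less self-contained, whereas the paper's argument is elementary (Sard plus the Long Tubular Flow Theorem) and amounts to a direct proof of the smooth-boundary case of the viability principle you invoke. Two small repairs to your write-up: with your conventions ($x_n>0$ internal, inward normal along $+x_n$) one has $\dot h(t^\ast)=+S_i(q)$ up to a positive factor, not $-S_i(q)$ (harmless, since you abandon that computation in favour of Nagumo), and you should add that a trajectory confined to the compact set $\overline{K_i}\subset G$ is defined for all $t\ge 0$, which Definition~1.1 implicitly requires.
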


The proof of Theorem~\ref{th:lyapunov_from_ge0} is given in next section.

This theorem is a strong version of results in~\cite{SharkoYu2005,SharkoYu2010},
where the approach to study stability using a discrete set of nested
hypersurfaces was first used.

Let us recall some definitions.

A \emph{regular value} of a smooth function $F$ is a value such that the differential of $F$ is non-zero in every preimage of this value.

The maximal connected subsets (ordered by inclusion) of a nonempty topological space are called the \emph{connected components} of the space.

\begin{defn}\label{defn_quasi_isolated-point}
Suppose $z = F({\bf x})$  is a continuous function defined in a domain $G\subset \rr^n$. A point  ${\bf y} \in G$ is \emph{quasi-isolated} for the function  $z = F({\bf x})$, if  $\{{\bf y}\}$ is a connected component of the set  $F^{-1}(F({\bf y}))$.
\end{defn}

\begin{proposition}\label{proposition:local extremum}
Let $z = F({\bf x})$ be a continuous function defined in a domain   $G \subset \rr^n (n\geq 2)$ and ${\bf y} \in G$. Then ${\bf y} $ is a local  maximum or local minimum for $F$ if and only if $y$ is an isolated point of the set $F^{-1}(F({\bf y}))$.
\end{proposition}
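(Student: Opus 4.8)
The plan is to prove the two implications separately; essentially everything is an unwinding of the definitions except for one topological fact — the connectedness of a punctured Euclidean ball — which is precisely where the hypothesis $n \geq 2$ is used. Before starting I would fix the meaning of ``local maximum/minimum'' in the strict sense: $\mathbf{y}$ is a strict local minimum if $F(\mathbf{x}) > F(\mathbf{y})$ for all $\mathbf{x}$ in some punctured ball about $\mathbf{y}$, and similarly for a maximum. This is the reading for which the equivalence is literally true; under the non-strict reading the ``only if'' part fails, e.g. for $F(x_1,x_2)=x_1^2$ at the origin, which is a (non-strict) minimum but is not isolated in $F^{-1}(0)$.

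\emph{From extremum to isolation.} Suppose $\mathbf{y}$ is a strict local minimum. Then there is $r>0$ with $B(\mathbf{y},r) \subset G$ and $F(\mathbf{x}) \neq F(\mathbf{y})$ for all $\mathbf{x} \in B(\mathbf{y},r)\setminus\{\mathbf{y}\}$, so $F^{-1}(F(\mathbf{y})) \cap B(\mathbf{y},r) = \{\mathbf{y}\}$; this is exactly the assertion that $\mathbf{y}$ is an isolated point of $F^{-1}(F(\mathbf{y}))$. The case of a maximum is obtained by applying the argument to $-F$. This direction uses neither $n \geq 2$ nor continuity of $F$.

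\emph{From isolation to extremum.} Assume $\mathbf{y}$ is isolated in $F^{-1}(F(\mathbf{y}))$ and choose $r>0$ with $B(\mathbf{y},r) \subset G$ and $F(\mathbf{x}) \neq F(\mathbf{y})$ on $B(\mathbf{y},r)\setminus\{\mathbf{y}\}$. Consider $g = F - F(\mathbf{y})$, which is continuous on $B(\mathbf{y},r)$ and nonzero off $\mathbf{y}$. Since $n \geq 2$, the punctured ball $B(\mathbf{y},r)\setminus\{\mathbf{y}\}$ is path-connected (join any two of its points by a broken line avoiding the center), hence $g\bigl(B(\mathbf{y},r)\setminus\{\mathbf{y}\}\bigr)$ is a connected subset of $\rr \setminus \{0\}$ and so lies in a single component, $(0,+\infty)$ or $(-\infty,0)$. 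In the first case $F > F(\mathbf{y})$ on the punctured ball and $\mathbf{y}$ is a (strict) local minimum; in the second it is a (strict) local maximum.

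The step I expect to be the crux — and the only one that is more than bookkeeping — is the connectedness of the punctured ball in the last paragraph; this is exactly what breaks in dimension one, where $F(x) = x$ at $\mathbf{y}=0$ shows the equivalence to be false (the point is isolated in $F^{-1}(0)=\{0\}$ but is not an extremum). No compactness and no smoothness of $F$ enter anywhere; only its continuity, together with $n \geq 2$.
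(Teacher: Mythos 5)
Your proof is correct and takes essentially the same route as the paper's: the extremum-to-isolation direction is immediate, and the converse rests on the connectedness of a punctured neighbourhood of $\mathbf{y}$ for $n \geq 2$, which forces $F - F(\mathbf{y})$ to keep one sign there (the paper phrases this as an intermediate-value contradiction along a path in $U$ avoiding $\mathbf{y}$, which is the same idea). Your observation that the equivalence is literally true only for \emph{strict} extrema also correctly pins down an imprecision the paper glosses over with ``necessity is obvious.''
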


{\it Proof.} Necessity is obvious.

Sufficiency is a consequence of the following arguments. Consider a domain $U \subset G $, such that  $F^{-1}(F({\bf y}))\cap U = {\bf y}$.  Let  ${\bf y_1} $ and ${\bf y_2} $ be two points in $G $ such that $ F({\bf y_1})  >  F({\bf y}) >  F({\bf y_2})$. Consider  a continuous path $\gamma (t)$ in $U $, such that $\gamma (0) = {\bf y_2}$,  $\gamma (1) = {\bf y_1}$ and $\gamma(t) \cap {\bf y} = \emptyset$.
Let $ F({\gamma (t)})$ be the restriction of the function   $z = F({\bf x})$ to the path $\gamma (t)$. Since $ F(\gamma (1)) > F(\gamma (0))$ and the function $F(\gamma (t))$ is continuous, then there must be a $t_0$, such that  $ F({\gamma (t_0)})= F({\bf y})$. But this is impossible due to the selection of the path $\gamma (t)$.

Therefore for any point ${\bf y_1}\in U$ we have that either $ F({\bf y_1})  >  F({\bf y})$ and so $\mathbf{y}$ is a local minimum, or $F({\bf y_1})  <  F({\bf y})$ and then $\mathbf{y}$ is a local maximum.
$\square$

\begin{proposition}\label{proposition:quasi_isolated-point is critical point}
Suppose $z = F({\bf x})$  is a $C^r$-smooth function defined in the domain  $G\subset \rr^n$ and point  ${\bf y} \in G$ is quasi-isolated for the function  $z = F({\bf x})$.  Then ${\bf y}$  is a  critical point of the function  $z = F({\bf x})$.
\end{proposition}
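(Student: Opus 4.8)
The plan is to prove the contrapositive via the implicit function theorem. Assume $\mathbf{y}$ is \emph{not} a critical point, so $\grad F(\mathbf{y}) \neq 0$; throughout we take $n \ge 2$ (as in Proposition~\ref{proposition:local extremum} — for $n=1$ the claim already fails, e.g.\ for $F(x)=x$ at $x=0$). After permuting coordinates we may suppose $\partial F/\partial x_n(\mathbf{y}) \neq 0$, and by continuity this partial derivative stays nonzero on a product box $V \times W \subset G$ around $\mathbf{y}=(\mathbf{y}',y_n)$, where $V \subset \rr^{n-1}$ is an open ball about $\mathbf{y}'$ and $W \subset \rr$ an interval about $y_n$. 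On such a box, for each fixed $\mathbf{x}' \in V$ the function $x_n \mapsto F(\mathbf{x}',x_n)$ is strictly monotone on $W$.

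Shrinking $V$ if necessary, the implicit function theorem yields a $C^r$ map $g \colon V \to W$ with $g(\mathbf{y}')=y_n$ and $F(\mathbf{x}',g(\mathbf{x}'))=F(\mathbf{y})$ for all $\mathbf{x}' \in V$; strict monotonicity in the last variable then forces
\[
(V \times W) \cap F^{-1}(F(\mathbf{y})) = \Gamma := \{(\mathbf{x}',g(\mathbf{x}')) : \mathbf{x}' \in V\}.
\]
The set $\Gamma$ is the graph of a continuous map over the connected set $V$, hence connected, and it contains $\mathbf{y}$; so $\Gamma$ lies in the connected component of $\mathbf{y}$ in $F^{-1}(F(\mathbf{y}))$. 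Since $n\ge 2$, the ball $V$ is an infinite set, so $\Gamma \supsetneq \{\mathbf{y}\}$, and this component is strictly larger than $\{\mathbf{y}\}$ --- contradicting the assumption that $\mathbf{y}$ is quasi-isolated. Hence $\grad F(\mathbf{y})=0$, i.e.\ $\mathbf{y}$ is a critical point of $F$.

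I do not expect a serious obstacle. The two points that deserve care are: ensuring that the local level set is \emph{exactly} the graph $\Gamma$ (not merely that $\Gamma$ sits inside it), which is where strict monotonicity of $F$ in $x_n$ on the shrunken box is used together with uniqueness in the implicit function theorem; and noting that being quasi-isolated is strictly weaker than being an isolated point of the level set, so Proposition~\ref{proposition:local extremum} cannot be invoked directly --- the connectedness of the positive-dimensional graph $\Gamma$ is the substitute that produces the contradiction.
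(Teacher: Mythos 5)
Your proof is correct and is essentially the paper's argument made explicit: the paper's one-line appeal to a local change of coordinates making $F$ linear is exactly your implicit-function-theorem straightening, and both conclude that the local level set through $\mathbf{y}$ is a connected set strictly larger than $\{\mathbf{y}\}$, contradicting quasi-isolatedness. Your added care about exactness of the graph representation and the implicit hypothesis $n \ge 2$ only fills in details the paper leaves tacit.
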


{\it Proof.} Suppose $z$ is not critical for $F$. Then there exist change of coordinates in a neighborhood $U \cap G$  of the point ${\bf y} $, such that our function will be linear function in $U$. But this contradicts to the assumption ${\bf y} $ is a quasi-isolated point.
$\square$

\begin{defn}\label{defn_L_function}
Suppose $z = F({\bf x})$  is a $C^r$-smooth function defined in a domain $ G\subset \rr^n$ and  ${\bf y} \in G$. Let $ F({\bf y}) = a$.
The function $z = F({\bf x})$ is called an $\bf {L}$-function for the point ${\bf y}$ if
there exists a sequence $(a_i)$ of regular values of $z = F({\bf x})$
with the following properties:
\begin{itemize}
\item [i.] $a_i \rightarrow a$ when $i \rightarrow \infty$;
\item [ii.] for each $i$ there exists a connected component ${\bf
    H}_{i}^{n-1}$ of the set $F^{-1}(a_i)$ such that ${\bf H}_{i}^{n-1}$ is a smooth hypersurface that bounds the point  ${\bf y}$;
\item [iii.] diameters of ${\bf H}_{i}^{n-1}$ tend to $0$ when $i \rightarrow \infty$.
\end{itemize}
\end{defn}

\begin{proposition}\label{proposition: critical point of L}
Let $F$ be an $\bf {L}$-function for a point ${\bf y} \in G$. Then ${\bf y}$ is the critical point of the function $z = F({\bf x})$.
\end{proposition}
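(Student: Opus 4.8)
The plan is to reduce this to Proposition~\ref{proposition:quasi_isolated-point is critical point}, which says that a quasi-isolated point of a $C^r$ function is critical. So the whole task is to show that if $F$ is an $\mathbf{L}$-function for $\mathbf{y}$, then $\mathbf{y}$ is quasi-isolated, i.e.\ $\{\mathbf{y}\}$ is a connected component of $F^{-1}(a)$ where $a = F(\mathbf{y})$.

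First I would extract from the definition of an $\mathbf{L}$-function the geometric picture: we have regular values $a_i \to a$ and hypersurfaces $\mathbf{H}_i^{n-1} \subset F^{-1}(a_i)$, each bounding $\mathbf{y}$, with $\diam(\mathbf{H}_i^{n-1}) \to 0$. Let $U_i$ denote the internal (bounded) component of $\bR^n \setminus \mathbf{H}_i^{n-1}$. Since $\mathbf{H}_i^{n-1}$ bounds $\mathbf{y}$, we have $\mathbf{y} \in U_i$, and because $\diam(\mathbf{H}_i^{n-1}) \to 0$ while $\mathbf{y} \in \overline{U_i}$ and the whole of $\overline{U_i} = U_i \cup \mathbf{H}_i^{n-1}$ has diameter controlled by $\diam(\mathbf{H}_i^{n-1})$ (any point of $U_i$ is "surrounded" by $\mathbf{H}_i^{n-1}$, so it lies within $\diam(\mathbf{H}_i^{n-1})$ of the surface), it follows that $\diam(\overline{U_i}) \to 0$ and hence $\bigcap_i \overline{U_i} = \{\mathbf{y}\}$. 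This is the step I expect to require the most care: justifying that the closed inside region shrinks to the point, using only that the boundary hypersurface has small diameter and that it bounds $\mathbf{y}$ in the sense of Definition~\ref{defn_surf}; the bounding property should force $\overline{U_i} \subset \overline{B(\mathbf{y}, \diam(\mathbf{H}_i^{n-1}))}$ or something comparable, since a ray from $\mathbf{y}$ to any far point must cross $\mathbf{H}_i^{n-1}$.

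Next I would argue that $\{\mathbf{y}\}$ is a connected component of $F^{-1}(a)$. Let $C$ be the connected component of $F^{-1}(a)$ containing $\mathbf{y}$; I want $C = \{\mathbf{y}\}$. Suppose not, so $C$ contains some point $\mathbf{z} \neq \mathbf{y}$. Since $a_i \neq a$ (regular values; and $\mathbf{y}$ is critical-to-be so $F(\mathbf{y})=a$ cannot be among the regular values $a_i$, at least eventually — I would note $a_i \neq a$ for all $i$ after passing to a tail), each $\mathbf{H}_i^{n-1} \subset F^{-1}(a_i)$ is disjoint from $F^{-1}(a) \supset C$. Hence $C$ is a connected subset of $\bR^n \setminus \mathbf{H}_i^{n-1}$ containing $\mathbf{y} \in U_i$, so $C \subset U_i$ for every $i$, giving $C \subset \bigcap_i U_i \subset \bigcap_i \overline{U_i} = \{\mathbf{y}\}$, a contradiction. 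Therefore $C = \{\mathbf{y}\}$, so $\mathbf{y}$ is quasi-isolated for $F$.

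Finally, apply Proposition~\ref{proposition:quasi_isolated-point is critical point} to conclude that $\mathbf{y}$ is a critical point of $F$, which is the assertion. The only genuinely nontrivial point, as noted, is the diameter-shrinking estimate for the closed internal regions $\overline{U_i}$; everything else is elementary point-set topology about connected components avoiding the separating hypersurfaces.
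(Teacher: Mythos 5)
Your argument is correct, but it follows a genuinely different route from the paper's. The paper's proof is a one-step local argument: assuming $\mathbf{y}$ is regular, it straightens $F$ by a change of coordinates near $\mathbf{y}$ (so $F$ becomes linear in a neighbourhood $U$), and notes that for large $i$ the hypersurfaces $\mathbf{H}_i^{n-1}$, having diameters tending to $0$ and bounding $\mathbf{y}$, would have to lie in $U$ inside level sets of a linear function --- impossible for closed hypersurfaces bounding $\mathbf{y}$. You instead prove that $\mathbf{y}$ is quasi-isolated and then invoke Proposition~\ref{proposition:quasi_isolated-point is critical point}; this is essentially the ``necessity'' half of Theorem~\ref{th_surf_family}, which the paper proves later by the same enclosure idea you sketch (a small $\mathbf{H}_M^{n-1}$ separates $\mathbf{y}$ from any other point of its level-set component $C$, and maximality of connected components gives the contradiction). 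So your route establishes a stronger intermediate fact at the cost of more point-set work, while the paper's is shorter and purely local; your shrinking estimate $\Cl{U_i}\subset \{\,x : \rho(x,\mathbf{y})\le \diam(\mathbf{H}_i^{n-1})\,\}$ is justified correctly via path-connectedness of the bounded component and Definition~\ref{defn_surf}. One step does need repair: you argue $a_i\neq a$ on the grounds that $\mathbf{y}$ is ``critical-to-be,'' which is circular, since criticality is the conclusion. Fortunately the disjointness $C\cap\mathbf{H}_i^{n-1}=\emptyset$ you actually use holds without that: if $a_i=a$, then $C$ and $\mathbf{H}_i^{n-1}$ are connected components of the same level set, and they are distinct because $\mathbf{y}\in C\setminus\mathbf{H}_i^{n-1}$ (bounding requires $\mathbf{y}\notin\mathbf{H}_i^{n-1}$), hence disjoint --- the same component-maximality device the paper uses in the necessity part of Theorem~\ref{th_surf_family}. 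With that one-line fix your proof is complete.
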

{\it Proof.} Suppose the opposite. Let the point  ${\bf y}$ be a regular  point of the function $z = F({\bf x})$. Then there exist change of coordinates in a neighborhood $U$ of the point ${\bf y} $,
such that our function will be linear function in $U$. This contradicts to the existence of
smooth hypersurfaces ${\bf H}_{i}^{n-1}$  that bound the point  ${\bf y}$.
$\square$

Note that if $z = F({\bf x})$ is a smooth $\bf {L}$-function, than it
defines not only the sequence of hypersurfaces ${\bf H}_{i}^{n-1}$ but
also a gradient vector field $\overrightarrow{\grad}F({\bf x})$ which
can act as an orthogonal vector field $\vec{N_i}({\bf x})$ in the
definition of the function $S_i({\bf x})$ introduced above.
On each $H^{n-1}_i$ all vectors $\overrightarrow{\grad}F({\bf x})$ are either directed towards the internal component or they have the opposite direction. Let us assign $\varepsilon(H^{n-1}_i) = +1$ if  $\overrightarrow{\grad}F({\bf x})$ is directed towards the internal component on $H^{n-1}_i$ and $\varepsilon(H^{n-1}_i) = -1$ otherwise.

By $\tilde S_i({\bf x})$ denote the following function
$<\varepsilon(H^{n-1}_i) \overrightarrow{\grad}F({\bf x}), \myvectorfield({\bf x})>$.

\begin{corr}
Let $z = F({\bf x})$ be an $\bf {L}$-function.
Fix a sequence of hypersurfaces $\{ {\bf H}_{i}^{n-1} \}$ satisfying Definition~\ref{defn_L_function}.

If $\tilde S_i({\bf x})\ge 0$ for any $i$ in every point ${\bf x}$ of submanifold ${\bf H}_{i}^{n-1}\in \{ {\bf H}_{i}^{n-1} \}$
then the origin is stable in the sense of Lyapunov for the system~\eqref{eq_system_1}.
\end{corr}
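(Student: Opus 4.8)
The plan is to reduce the corollary directly to Theorem~\ref{th:lyapunov_from_ge0}. The $\mathbf{L}$-function $F$ together with the chosen sequence $\{\mathbf{H}_i^{n-1}\}$ already supplies almost everything the theorem requires: by Definition~\ref{defn_L_function} each $\mathbf{H}_i^{n-1}$ is a connected closed smooth hypersurface that bounds the point $\mathbf{y}$ (which, after the coordinate change of Section~2, is the origin), and the diameters of the $\mathbf{H}_i^{n-1}$ tend to $0$. The only hypotheses of Theorem~\ref{th:lyapunov_from_ge0} not literally handed to us are that the sequence $\{\mathbf{H}_i^{n-1}\}$ is \emph{nested} in the sense of Definition~\ref{defn_nested_seq}, that it \emph{converges} to the origin in the sense of Definition~\ref{defn_surf_family_converge}, and that the sign function $S_i(\mathbf{x})$ built from the \emph{inward unit normal} is nonnegative. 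So the proof splits into: (1) pass from the given sequence to a nested subsequence; (2) upgrade the diameter condition to convergence of the Hausdorff distance $d(p,\mathbf{H}_i^{n-1})\to 0$; (3) identify $\tilde S_i$ with $S_i$ up to a positive factor; then (4) invoke the theorem.

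For step (1): since $\diam(\mathbf{H}_i^{n-1})\to 0$ and each hypersurface bounds the origin, the internal component $\Int(\mathbf{H}_i^{n-1})$ is a neighbourhood of the origin whose diameter also shrinks to $0$ (it is contained in a ball of radius $\diam(\mathbf{H}_i^{n-1})$ about any of its points, hence about the origin). Passing to a subsequence $i_1 < i_2 < \cdots$ we may assume $\diam(\mathbf{H}_{i_{k+1}}^{n-1})$ is small enough that $\mathbf{H}_{i_{k+1}}^{n-1}$, being a connected set through points arbitrarily near the origin and disjoint from $\mathbf{H}_{i_k}^{n-1}$ (their values $a_{i_k}\neq a_{i_{k+1}}$ are distinct regular values, so the level sets are disjoint), lies entirely in the internal component of $\mathbf{H}_{i_k}^{n-1}$; a connected set disjoint from a separating hypersurface lies in one of its two complementary components, and it cannot be the unbounded one once its diameter is below that of $\mathbf{H}_{i_k}^{n-1}$. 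Thus $\{\mathbf{H}_{i_k}^{n-1}\}$ is nested. For step (2): $d(p,\mathbf{H}_{i_k}^{n-1}) = \max_{y\in \mathbf{H}_{i_k}^{n-1}}\|p-y\| \le \diam(\mathbf{H}_{i_k}^{n-1})+ \operatorname{dist}(p,\mathbf{H}_{i_k}^{n-1})$, and since $p$ lies in the internal component which has diameter $\to 0$, both terms vanish, giving convergence to $p$.

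For step (3): on $\mathbf{H}_i^{n-1}$ the gradient $\overrightarrow{\grad}F(\mathbf{x})$ is nowhere zero ($a_i$ is a regular value) and orthogonal to the level hypersurface, hence parallel to the unit normal; by the definition of $\varepsilon(\mathbf{H}_i^{n-1})$, the vector $\varepsilon(\mathbf{H}_i^{n-1})\,\overrightarrow{\grad}F(\mathbf{x})$ points towards the internal component, so it is a positive multiple $\lambda_i(\mathbf{x})\,\vec N_i(\mathbf{x})$ with $\lambda_i(\mathbf{x}) = \|\overrightarrow{\grad}F(\mathbf{x})\| > 0$. Therefore $\tilde S_i(\mathbf{x}) = \lambda_i(\mathbf{x})\, S_i(\mathbf{x})$, so $\tilde S_i(\mathbf{x})\ge 0$ on $\mathbf{H}_i^{n-1}$ is equivalent to $S_i(\mathbf{x})\ge 0$ there, and this equivalence is inherited by the subsequence. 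Finally, applying Theorem~\ref{th:lyapunov_from_ge0} to the nested, convergent sequence $\{\mathbf{H}_{i_k}^{n-1}\}$ with $S_{i_k}(\mathbf{x})\ge 0$ yields Lyapunov stability of the origin. $\square$

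The main obstacle I anticipate is step (1): extracting the nested subsequence rigorously requires the topological fact that a connected closed hypersurface separates $\bR^n$ into exactly two components with one bounded (cited in the paper as \cite{two_components}), plus the elementary but slightly fiddly argument that a connected set of small diameter lying off such a hypersurface must sit in its bounded side — together with checking that distinct regular values force the level components to be disjoint so that "small diameter" can actually be arranged along a subsequence. Everything after that is bookkeeping.
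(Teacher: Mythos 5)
Your reduction is, at its core, the paper's own proof: the paper's argument consists precisely of your step (3) — observing that $\varepsilon({\bf H}_i^{n-1})\,\overrightarrow{\grad}F({\bf x})$ is the normal field on ${\bf H}_i^{n-1}$ pointing towards the internal component, so that $\tilde S_i\ge 0$ is equivalent to $S_i\ge 0$ for the inward unit normal — followed by an appeal to Theorem~\ref{th:lyapunov_from_ge0}. Your steps (1)--(2), which extract a nested subsequence converging to the origin from the hypersurfaces of Definition~\ref{defn_L_function}, make explicit what the paper dismisses as ``straightforward''; passing to a subsequence is harmless because $\tilde S_i\ge 0$ is assumed for every $i$. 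So the approach is correct and essentially the same, with your version more complete on the nestedness/convergence bookkeeping.

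Two points in step (1) do need patching. First, Definition~\ref{defn_L_function} does not guarantee $a_{i_k}\neq a_{i_{k+1}}$; however, if two values coincide the corresponding connected components of the level set are either equal or disjoint, and equality is excluded once you arrange strictly decreasing diameters along the subsequence, so disjointness survives. Second, the principle you lean on — a connected set disjoint from ${\bf H}_{i_k}^{n-1}$ with diameter smaller than $\diam({\bf H}_{i_k}^{n-1})$ cannot lie in the unbounded component — is false for arbitrary connected sets (a tiny sphere far from the origin is a counterexample); what saves your conclusion is that both hypersurfaces bound the origin. Indeed, if ${\bf H}_{i_{k+1}}^{n-1}$ lay in the unbounded component of ${\bf H}_{i_k}^{n-1}$, then the internal component of ${\bf H}_{i_k}^{n-1}$, being connected, containing the origin and missing ${\bf H}_{i_{k+1}}^{n-1}$, would be contained in the internal component of ${\bf H}_{i_{k+1}}^{n-1}$; hence ${\bf H}_{i_k}^{n-1}$ itself would lie in that internal component, giving $\diam({\bf H}_{i_k}^{n-1})\le\diam({\bf H}_{i_{k+1}}^{n-1})$ (the internal component's diameter never exceeds that of its bounding hypersurface), contradicting the strict decrease. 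Alternatively, and more simply, choose $i_{k+1}$ with $\diam({\bf H}_{i_{k+1}}^{n-1})<\rho(x_0,{\bf H}_{i_k}^{n-1})$: since every point of ${\bf H}_{i_{k+1}}^{n-1}$ lies within $\diam({\bf H}_{i_{k+1}}^{n-1})$ of the origin, the whole hypersurface then sits inside a ball about the origin that is contained in the internal component of ${\bf H}_{i_k}^{n-1}$. With either patch your argument closes and the application of Theorem~\ref{th:lyapunov_from_ge0} is legitimate.
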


{\it Proof.}
Observe that $\varepsilon(H^{n-1}_i) \overrightarrow{\grad}F({\bf x})$ is the normal vector field on ${\bf H}_{i}^{n-1}$ and is directed towards its internal component for every $i$. Stability in the sense of Lyapunov for the system~\eqref{eq_system_1} is the straightforward consequence of Theorem~\ref{th:lyapunov_from_ge0}.
$\square$

\section{Proof of theorem~\ref{th:lyapunov_from_ge0}.}\label{sec:lyapunov_from_ge0}

In order to prove theorem~\ref{th:lyapunov_from_ge0} we first need to prove some auxiliary statements.

Suppose $K^n \subset \rr^n$ is a compact manifold with a $C^1$-smooth  closed boundary hypersurface ${\bf H}^{n-1}$ and with an interior $W = \Int K^n$.
Let $\vec{N}({\bf x})$ be the unit normal vector field on ${\bf H}^{n-1}$ directed towards interior of $K^n$.
Let also $\myvectorfield({\bf x})$ be a vector field defined in a neighbourhood of $K^n$ such that $S({\bf x})\ge 0$ on ${\bf H}^{n-1}$, where $S({\bf x})$ is the scalar product $<\vec{N}({\bf x}), \myvectorfield({\bf x})>$.

According to Long Tubular Flow Theorem (see \cite{PalisDiMelo}), for any arc of a trajectory of the vector field
$\myvectorfield({\bf x})$
which is compact and not closed there exists a $C^1$-smooth flow-box containing that arc.
Consider a flow-box of an arc of a trajectory of the vector field $\myvectorfield({\bf x})$.
The boundary of the flow-box consists of three parts:
two bases (parts of the boundary that act as cross-sections of the the flow)
and the side part that consists of flow lines.

\begin{figure}[htbp]
  \centering
\includegraphics[scale=1.0]{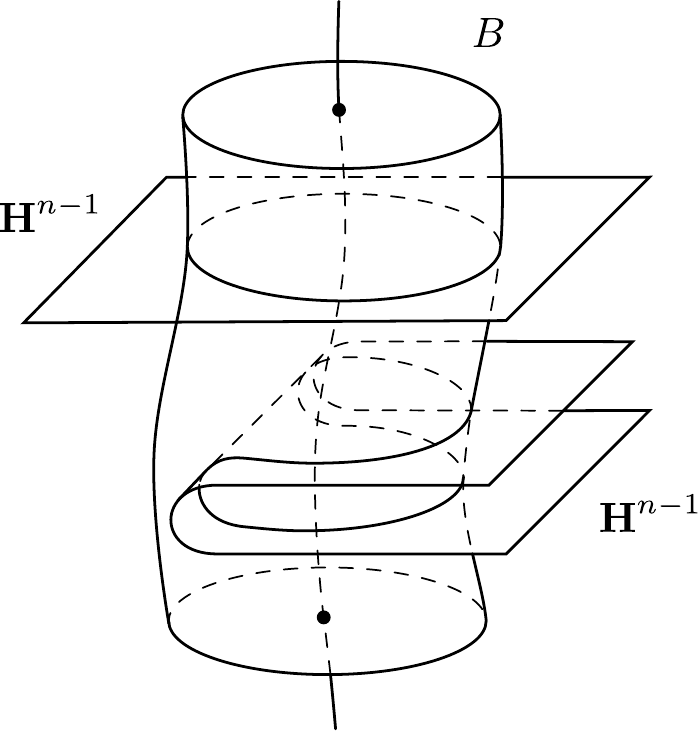}
  \caption{a flow-box $B$ that intersects ${\bf H}^{n-1}$.}
  \label{fig:flowbox}
\end{figure}

\begin{lemma}\label{lm:flow_box_measure_0}
Let $B$ be a flow-box of an arc of a trajectory of the vector field $\myvectorfield({\bf x})$.
Suppose that $B$ intersects ${\bf H}^{n-1}$ so that the bases of $B$ does not intersect ${\bf H}^{n-1}$.
(See fig.~\ref{fig:flowbox}).
Let $T$ be the set of points of the hypersurface ${\bf H}^{n-1}$ where flow lines are  tangent to $H^{n-1}$.
Let also $p_1$ and $p_2$ be projections of $T$ along the flow lines
on the bases of $B$. Then the Lebesgue measures of $T_1 = p_1(T)$, $T_2 = p_2(T)$ in the corresponding bases are 0.
\end{lemma}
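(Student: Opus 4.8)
The plan is to straighten the flow inside $B$, identify the tangency set with the critical set of a coordinate projection, and then apply the $C^1$ case of Sard's theorem for maps between manifolds of the same dimension. Concretely, I would first use the $C^1$ flow-box structure to fix a $C^1$-diffeomorphism $\Phi\colon B\to D^{n-1}\times[0,1]$ carrying $\myvectorfield$ to $\partial/\partial t$ and the two bases of $B$ onto $D^{n-1}\times\{0\}$ and $D^{n-1}\times\{1\}$. Under $\Phi$ the flow lines become the vertical segments $\{\mathbf{y}\}\times[0,1]$, so both projections $p_1,p_2$ along flow lines onto the bases become the coordinate projection $\pi\colon(\mathbf{y},t)\mapsto\mathbf{y}$ (each base being identified with $D^{n-1}$). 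Hence $\Phi(T_1)=\Phi(T_2)=\pi\bigl(\Phi(T\cap B)\bigr)$, and since $\Phi$ and $\Phi^{-1}$ are $C^1$, hence locally Lipschitz, they send Lebesgue-null sets to Lebesgue-null sets; so it suffices to show that $\pi(\Phi(T\cap B))$ has measure $0$ in $D^{n-1}$. Put $\Sigma=\Phi(H^{n-1}\cap B)$, a compact set that is a $C^1$ hypersurface in the interior of $D^{n-1}\times[0,1]$ and whose topological boundary lies in the side $\partial D^{n-1}\times[0,1]$; here one uses the hypothesis that the bases of $B$ do not meet $H^{n-1}$.

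The key observation is that the interior part of $\Phi(T\cap B)$ is precisely the critical set of $\pi|_{\Sigma}\colon\Sigma\to D^{n-1}$. Indeed, the kernel of $d\pi_{\mathbf{x}}$ is the vertical line $\bR\,\partial/\partial t$, that is, the flow direction; so at an interior point $\mathbf{x}\in\Sigma$ the linear map $d(\pi|_\Sigma)_{\mathbf{x}}$ between the $(n-1)$-dimensional spaces $T_{\mathbf{x}}\Sigma$ and $\bR^{n-1}$ fails to be an isomorphism exactly when the flow direction lies in $T_{\mathbf{x}}\Sigma$, i.e.\ exactly when the flow line is tangent to the hypersurface at $\mathbf{x}$. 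Therefore $\pi(\Phi(T\cap B))$ is contained in the union of the set of critical values of $\pi|_{\Sigma}$ and the set $\pi(\partial\Sigma)\subset\partial D^{n-1}$; the latter is null, so the whole statement reduces to the claim that the $C^1$ map $\pi|_{\Sigma}$, between manifolds of equal dimension $n-1$, has a null set of critical values.

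For this last step I would prove the elementary equidimensional $C^1$-Sard statement directly, rather than cite the $C^\infty$ version, since only $C^1$-regularity is available: in a local chart, cover the critical set of a $C^1$ map $g$ defined on an open subset of $\bR^{n-1}$ by cubes of side $\delta$ so small that $Dg$ oscillates by at most $\varepsilon$ on each; on a cube meeting the critical set $Dg$ is within $\varepsilon$ of a matrix of rank $\le n-2$, so the image of the cube lies within an $\varepsilon\delta$-neighbourhood of a set of diameter $O(\delta)$ inside an affine hyperplane, hence has $(n-1)$-volume $O(\varepsilon\delta^{n-1})$; summing over the $O(\delta^{-(n-1)})$ cubes bounds the total image by $O(\varepsilon)$, and letting $\varepsilon\to0$ finishes it. (Equivalently, one passes to finitely many $C^1$-graph charts for $H^{n-1}$: over charts using only the base variables there are no tangency points at all, while over a chart using the flow coordinate the Jacobian of $\pi|_\Sigma$ equals, up to sign, the flow-derivative of the local defining function, which reduces matters to the one-dimensional $C^1$-Sard lemma.) Applying this to $g=\pi|_{\Sigma}$ shows that $\pi(\Phi(T\cap B))$ is null, hence $T_1$ and $T_2$ are null. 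The only genuine obstacle is the regularity bookkeeping---making sure that $\Sigma$ is an honest $C^1$ submanifold so that \emph{critical set} and \emph{$C^1$-Sard} make sense, and keeping track of the fact that only $C^1$ (not $C^\infty$) is at hand---since once the statement is recast as ``critical values of a $C^1$ equidimensional map'' the differential-topological content is routine. In particular, the sign condition $S(\mathbf{x})\ge0$ is not needed for this lemma.
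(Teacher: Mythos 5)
Your proof is correct and follows essentially the same route as the paper's: identify the tangency points inside the flow-box with the critical set of the projection along flow lines restricted to the hypersurface, dispose of the part of $T$ lying over the side of the box separately, and conclude by Sard that the set of critical values is Lebesgue-null in the base. Your explicit straightening of the box and your self-contained covering argument for the equidimensional $C^1$ Sard statement merely make precise what the paper handles by citing Sard's theorem, and they are a welcome extra care given that only $C^1$ regularity of the flow-box is available.
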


{\it Proof.}
We can completely ignore the tangent points that belong to the part of
the boundary of $B$ that consists of flow lines because projections of
those points on the cross-section bases of $B$ have zero Lebesgue measure.
Consider the set $T_0 = T \cap \Int B$.
The intersection of $\Int B$ and the hypersurface ${\bf H}^{n-1}$ is open in ${\bf H}^{n-1}$.
Hence, the intersection $\Int P\cap {\bf H}^{n-1}$ is a submanifold
in ${\bf H}^{n-1}$.
Projection $p_1$ of the set $T_0$ along flow lines on a cross-section
base is a smooth map. The smoothness of $p_1$ is the same as the
smoothness of the flow-box $B$. Therefore, the set of singularities of
$p_1$ has zero Lebesgue measure according to the Sard's Theorem~\cite{Sternberg}.
The same arguments hold for $p_2$.

It is obvious that the set of  singularities of $p_1$ (respectively, of $p_2$) coinsides with the intersection of the set of tangent points of hypersurface ${\bf H}^{n-1}$ to the flow lines with the interior of the flow-box.
$\square$

\begin{lemma}\label{th:lyapunov_boundness_surface}

If $S({\bf x})\ge 0$ on ${\bf H}^{n-1}$ then any trajectory of the
vector field $\myvectorfield({\bf x})$ does not leave the manifold $K^n$.
\end{lemma}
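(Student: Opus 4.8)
The plan is to reduce the claim to the case where $S({\bf x})$ is \emph{strictly} positive on ${\bf H}^{n-1}$, dispose of that case by a barrier argument, and recover the inequality $S \ge 0$ by an approximation. \emph{Barrier step.} Fix ${\bf q} \in {\bf H}^{n-1}$; representing ${\bf H}^{n-1}$ near ${\bf q}$ as a $C^1$ graph over its tangent hyperplane one builds a $C^1$ function $\phi_{\bf q}$ on a neighbourhood $U$ of ${\bf q}$ with $\phi_{\bf q} > 0$ on $U \cap W$, $\phi_{\bf q} = 0$ on $U \cap {\bf H}^{n-1}$, $\phi_{\bf q} < 0$ on $U \setminus K^n$ and $\grad \phi_{\bf q}({\bf q}) = \vec{N}({\bf q})$. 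Let $\vec{w}$ be any $C^1$ field with $\langle \vec{N}, \vec{w} \rangle > 0$ everywhere on ${\bf H}^{n-1}$; if some forward trajectory ${\bf x}(t)$ of $\vec{w}$ with ${\bf x}(0) \in K^n$ left $K^n$, with first exit time $t_0$, then ${\bf x}(t_0) = {\bf q} \in {\bf H}^{n-1}$, the $C^1$ function $h(t) = \phi_{\bf q}({\bf x}(t))$ is $\ge 0$ on $(t_0 - \varepsilon, t_0]$ and $< 0$ along some $t_k \downarrow t_0$, yet $h'(t_0) = \langle \grad \phi_{\bf q}({\bf q}), \vec{w}({\bf q}) \rangle = \langle \vec{N}({\bf q}), \vec{w}({\bf q}) \rangle > 0$, so $h$ is increasing at $t_0$ and hence negative just before $t_0$ — a contradiction. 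Thus a strictly inward $C^1$ field has no trajectory leaving $K^n$.

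\emph{Approximation step.} Since ${\bf H}^{n-1}$ is only $C^1$, the unit normal field $\vec{N}$ is merely continuous and cannot itself serve as a perturbation. Instead, cover the compact ${\bf H}^{n-1}$ by finitely many sets on each of which a suitable constant vector $\vec{N}({\bf q}_j)$ still points strictly inward, and patch these by a smooth partition of unity to obtain a $C^\infty$ field $\vec{V}$ (extended by zero away from ${\bf H}^{n-1}$) with $\langle \vec{V}({\bf x}), \vec{N}({\bf x}) \rangle \ge c > 0$ on ${\bf H}^{n-1}$. For $\eta > 0$ set $\myvectorfield_\eta = \myvectorfield + \eta \vec{V}$; then $\langle \vec{N}, \myvectorfield_\eta \rangle = S + \eta \langle \vec{N}, \vec{V} \rangle \ge c\eta > 0$ on ${\bf H}^{n-1}$, so by the barrier step every forward trajectory of $\myvectorfield_\eta$ starting in $K^n$ stays in $K^n$ and, being confined to the compact $K^n$, exists for all $t \ge 0$. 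Now suppose a trajectory ${\bf x}$ of $\myvectorfield$ with ${\bf x}(0) \in K^n$ satisfied ${\bf x}(t_1) \notin K^n$ for some $t_1 > 0$; then ${\bf x}$ exists on $[0,t_1]$, and $\myvectorfield_\eta \to \myvectorfield$ uniformly, with uniformly bounded Lipschitz constants, on a compact neighbourhood of ${\bf x}([0,t_1]) \cup K^n$, so by continuous dependence of solutions on parameters the solution ${\bf x}_\eta$ of $\myvectorfield_\eta$ with ${\bf x}_\eta(0) = {\bf x}(0)$ satisfies ${\bf x}_\eta(t_1) \to {\bf x}(t_1)$ as $\eta \to 0^+$; since ${\bf x}_\eta(t_1) \in K^n$ and $K^n$ is closed, this forces ${\bf x}(t_1) \in K^n$, a contradiction.

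The step I expect to be the genuine obstacle is hidden in the degenerate situation $S({\bf q}) = 0$: there a trajectory can be tangent to ${\bf H}^{n-1}$ and one only obtains $h(t_0) = h'(t_0) = 0$, which with merely $C^1$ data does not by itself prevent $h$ from turning negative — the approximation above is precisely what circumvents this. Alternatively one can argue directly at the (necessarily tangential) exit point ${\bf q}$ using a long tubular flow-box $B$ about the corresponding arc of ${\bf x}$: by Lemma~\ref{lm:flow_box_measure_0} the flow lines tangent to ${\bf H}^{n-1}$ project to a null set, every transversal crossing of ${\bf H}^{n-1}$ by a flow line points inward because $S \ge 0$, hence almost every flow line entering $K^n \cap B$ remains in $K^n$ throughout $B$, and continuous dependence on the initial point carries this to the trajectory through ${\bf q}$ itself, contradicting that ${\bf q}$ is an exit point; in that variant the delicate point is choosing $B$ so that ${\bf H}^{n-1} \cap B$ actually separates $B$ into an inside and an outside part.
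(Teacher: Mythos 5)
Your argument is correct, and it takes a genuinely different route from the paper's. You prove the lemma as an instance of the classical Nagumo-type invariance theorem: a barrier step showing that a $C^1$ field which is strictly inward on ${\bf H}^{n-1}$ has no trajectory escaping $K^n$ (differentiate a local defining function of the boundary at the first exit time), then the perturbation $\myvectorfield + \eta\vec{V}$ with $\vec{V}$ smooth and uniformly inward, and finally continuous dependence to let $\eta \to 0^{+}$. The paper instead assumes some trajectory escapes, encloses an escaping arc in a long flow-box whose ``in'' base lies in $\Int K^n$ and whose ``out'' base lies outside $K^n$, and uses Lemma~\ref{lm:flow_box_measure_0} (Sard's theorem) to find a nearby flow line meeting ${\bf H}^{n-1}$ only transversally; such a line can exit $K^n$ only through an outward transversal crossing, forcing $S<0$ there. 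So the paper handles tangencies by a genericity/measure argument on nearby trajectories, while you handle them by perturbing the field itself. Your route buys independence from Lemma~\ref{lm:flow_box_measure_0} and from any measure-theoretic input, at the price of needing uniqueness and continuous dependence (available since $\myvectorfield$ is $C^1$) plus the construction of the auxiliary inward field; the paper's route never modifies $\myvectorfield$ and stays within the flow-box machinery it has already set up. Two small points to tidy: in the barrier step also cover $t_0=0$ (trajectory starting on ${\bf H}^{n-1}$), where the contradiction should come from the right-hand side ($h'(t_0)>0$ forces $h>0$ just after $t_0$, against $h(t_k)<0$ for $t_k \downarrow t_0$); and in the approximation step take $t_1$ only slightly beyond the first exit time so that ${\bf x}([0,t_1])$ remains in the neighbourhood of $K^n$ on which $\myvectorfield$ is defined and Lipschitz. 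Your concluding ``alternative'' sketch is essentially the paper's own proof.
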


Assume the converse.
Then there exists an integral trajectory
$\xi$ of the vector field $\myvectorfield({\bf x})$ such that $\xi$
leaves $K^n$. In other words,
$\xi \cap K^n\not=\emptyset$ and $\xi \cap \left( \rr^n \setminus K^n \right)\not=\emptyset$.
Since ${\bf H}^{n-1}$ is a boundary of $K^n$, $\xi \cap {\bf H}^{n-1}\not=\emptyset$ as well.
Note that $\xi$ can not be an equilibrium point of the vector field
$\myvectorfield({\bf x})$, because in this case $\xi$ is just a point
and can go nowhere.
Therefore, $\myvectorfield({\bf x})$ is a non-zero vector field along
$\xi$ and in a neighbourhood of $\xi$.

Choose a flow-box $B$ of the trajectory $\xi$ such that the ``out'' base of $B$ (i.~e. the part of $\partial P$ the trajectories of $B$ are going out through) does not intersect ${\bf H}^{n-1}$.
It is always possible because $\xi$ leaves $K^n$.
Denote the ``out'' base of $B$ by $B_{\rm\textsc{out}}$ and the ``in'' base of $B$ by $B_{\rm\textsc{in}}$.
Then, the ``out'' base $B_{\rm\textsc{out}}$ must be outside $K^n$.
(See fig.~\ref{fig:lemma}).

\begin{figure}[htbp]
  \centering
\includegraphics[scale=1.0]{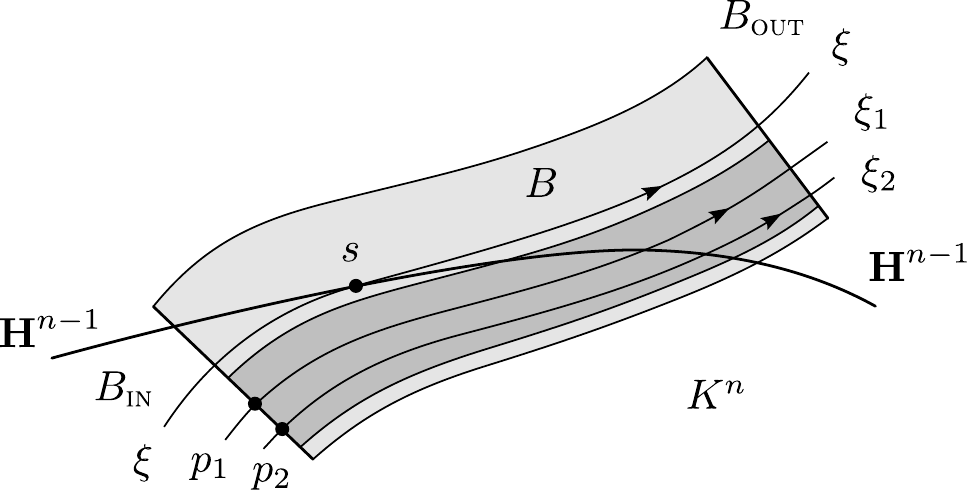}
  \caption{Illustration to the proof of Lemma\ref{th:lyapunov_boundness_surface}.}
  \label{fig:lemma}
\end{figure}

Since $\myvectorfield({\bf x})$ is a non-zero vector field along $\xi$, then $\xi$ should travel some time either in $W$ or on ${\bf H}^{n-1}$ before leaving $K^n$.
In the former case, $B_{\rm\textsc{in}}$ can be chosen to be inside of $W$ similarly to $B_{\rm\textsc{out}}$.
Consider the latter case. Pick a point $s\in \xi \cap {\bf H}^{n-1}$.
Choose $B$ to be long enough to contain $s$ inside.
$s\in {\bf H}^{n-1}$ and ${\bf H}^{n-1}=\partial K^n$. Hence, in any
neighbourhood of $s$ there is an open set of points that belongs to
$W=\Int K$. But $\Int B$ is also the neighbourhood of $s$ in $\rr^n$.
Therefore, we can always adjust $B_{\rm\textsc{in}}$ to contain the open in
$B_{\rm\textsc{in}}$ set of points that belong to $W$.

As a result, in both cases $B_{\rm\textsc{in}}$ can be at least chosen to
have an open (in $B_{\rm\textsc{in}}$) non-empty intersection with $W$.

Since the intersection $B_{\rm\textsc{in}} \cap W$ is open in $B_{\rm\textsc{in}}$
there exists a point $p_1\in B_{\rm\textsc{in}}$ such that a whole neighbourhood
$V(p_1)\ni p_1$ is also contained in $B_{\rm\textsc{in}}\cap W$.
Choose a new flow-box $B^1\subset B$  for the trajectory $\xi_1$ of the point $p_1$
with bases $B^1_{\rm\textsc{in}}$ and $B^1_{\rm\textsc{out}}$
such that $B^1_{\rm\textsc{in}}\subset V(p_1)\subset B_{\rm\textsc{in}}\cap W$ and
$B^1_{\rm\textsc{out}} \subset B_{\rm\textsc{out}}$.
By construction, both bases of $B^1$ do not intersect ${\bf H}^{n-1}$.

Applying Lemma~\ref{lm:flow_box_measure_0}, we conclude that the set $T
\subset B^1_{\rm\textsc{in}}$ of points whose trajectories always intersect ${\bf H}^{n-1}$
transversally have the full Lebesgue measure in $B^1_{\rm\textsc{in}}$.
Consider a point $p_2\in T$ such that the trajectory $\xi_2$ of
$p_2$ always intersects ${\bf H}^{n-1}$ transversally.
By construction, $\xi_2$ intersects $B^1_{\rm\textsc{in}}$ inside of $W$ and
intersects $B^1_{\rm\textsc{out}}$ outside of $K$. Also, $\xi_2$ intersects
${\bf H}^{n-1}$ in a finite number of points because the intersection is
always transversal. In those points $S({\bf x})\not= 0$ due to transversality.
But the hypersurface ${\bf H}^{n-1}$ divides $\rr^n$.
Then $\xi_2$ can only leave $K$ at a point of intersection with ${\bf
 H}^{n-1}$. Furthermore, this point of intersection is transversal due
to the choice of $\xi_2$. But then $S(p_2)< 0$ that contradicts the
conditions of the theorem.
This contradiction proves the theorem.$\square$ 

Using Lemma~\ref{th:lyapunov_boundness_surface} we can finish the proof of Theorem~\ref{th:lyapunov_from_ge0}.

\textbf{Proof.}
By assumption, $ S({\bf x})\ge 0$ on every hypersurface ${\bf H}_{i}^{n-1}$.
According to Lemma~\ref{th:lyapunov_boundness_surface},
for every hypersurface ${\bf H}_{i}^{n-1}$ any trajectory ${\bf
 X}(t)$, $t\ge 0$, of the system~\eqref{eq_system_1} that begins in a
point ${\bf x}\in {\bf H}_{i}^{n-1}$ does not leave the manifold
$K_{i}$ whose boundary is ${\bf H}_{i}^{n-1}$.
As a consequence, any other trajectory that starts at a point of
$K_{i}$ can not leave $K_{i}$, because otherwise that trajectory would
intersect ${\bf H}_{i}^{n-1}$.

As diameters of ${\bf H}_{i}^{n-1}$ tend to $0$ when $i \rightarrow \infty$
then stability of the origin in the sense of Lyapunov directly follows
from the definition of Lyapunov stability.
$\square$

\section {On existence of sequences of nested hypersurfaces.}

The theorem below shows how to build sequences of nested hypersurfaces converging to a point using a smooth enough function on an open domain $G \subset \bR^n$.

\begin{theorem}\label{th_surf_family}
Let $G$ be a domain in $\bR^n$. Assume that $F \in C^n(G)$.

Then $F$ is an $\bf{L}$-function  for $x_0 \in G$ if and only if $x_0$ is a quasi-isolated point of $F$.
\end{theorem}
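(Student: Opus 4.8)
The statement has two directions. The easy direction is that an $\mathbf{L}$-function for $x_0$ forces $x_0$ to be quasi-isolated. Suppose $F$ is an $\mathbf{L}$-function for $x_0$ with $F(x_0) = a$, and let $\mathbf{H}_i^{n-1}$ be the associated hypersurfaces in $F^{-1}(a_i)$ that bound $x_0$. Since $a_i \to a$, the hypersurfaces $\mathbf{H}_i^{n-1}$ approach the level set $F^{-1}(a)$; more precisely, because each $\mathbf{H}_i^{n-1}$ bounds $x_0$ and $\operatorname{diam}(\mathbf{H}_i^{n-1}) \to 0$, the compact regions $K_i$ bounded by $\mathbf{H}_i^{n-1}$ satisfy $\bigcap_i K_i = \{x_0\}$. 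If $x_0$ were not quasi-isolated, the connected component $C$ of $F^{-1}(a)$ containing $x_0$ would contain a point $y \neq x_0$; pick $i$ large enough that $y \notin K_i$. Then $C$ is a connected subset of $F^{-1}(a)$ joining a point inside $\mathbf{H}_i^{n-1}$ (namely $x_0$) to a point outside it (namely $y$), so $C$ must intersect $\mathbf{H}_i^{n-1} \subset F^{-1}(a_i)$ — but $a_i \neq a$ since $a_i$ is a regular value and, by Proposition~\ref{proposition: critical point of L}, $a$ is a critical value, so $F^{-1}(a_i) \cap F^{-1}(a) = \emptyset$, a contradiction. Hence $\{x_0\}$ is a component of $F^{-1}(F(x_0))$.

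The substantive direction is the converse: if $x_0$ is quasi-isolated for $F \in C^n(G)$, construct the sequence of regular values $a_i$ and hypersurfaces $\mathbf{H}_i^{n-1}$. First I would fix a small closed ball $\overline{B}(x_0,r) \subset G$; since $\{x_0\}$ is a component of $F^{-1}(F(x_0))$, a standard compactness argument (the component equals the intersection of relatively clopen neighbourhoods in the compact set $F^{-1}(a) \cap \overline{B}(x_0,r)$) gives, for every $\varepsilon > 0$, a smaller radius $\delta$ with the property that the component of $x_0$ in $F^{-1}(a) \cap \overline{B}(x_0,\delta)$ is contained in $B(x_0,\varepsilon)$ and in fact is separated from the sphere $\partial B(x_0,\delta)$. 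Shrinking further, one arranges that $F^{-1}(a)$ does not meet a whole closed annular shell; then for values $a'$ sufficiently close to $a$, the level set $F^{-1}(a')$ also misses that shell, so the "inner" piece of $F^{-1}(a')$ near $x_0$ is trapped inside a small ball. By Sard's theorem (applicable since $F \in C^n$, so $C^n$-smoothness suffices for Sard in dimension $n$), almost every value $a'$ near $a$ is regular, and for such $a'$ the relevant piece of $F^{-1}(a')$ is a smooth closed hypersurface. Choosing a sequence $a_i \to a$ of such regular values with the trapping radii shrinking to $0$ yields hypersurfaces of diameter tending to $0$.

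The delicate point — and the main obstacle — is showing that among the components of $F^{-1}(a_i)$ near $x_0$ one can select a \emph{single} component $\mathbf{H}_i^{n-1}$ that genuinely \textbf{bounds} $x_0$ in the sense of Definition~\ref{defn_surf}, i.e. that it is a connected closed hypersurface whose internal component contains $x_0$. The idea is a degree/separation argument: since $x_0$ is quasi-isolated and (by Proposition~\ref{proposition:quasi_isolated-point is critical point}) a critical point, but for $a'$ near $a$ the fibre $F^{-1}(a')$ is a smooth compact manifold, one looks at how $F^{-1}((-\infty, a'])$ (or $[a',\infty)$, depending on the sign picked out as in the $\varepsilon(\mathbf{H}_i^{n-1})$ discussion) sits near $x_0$: its boundary component surrounding $x_0$ must separate $x_0$ from the complement of the small ball, because any path from $x_0$ out of the ball crosses from $F < a'$ to $F > a'$ (or vice versa) and hence crosses $F^{-1}(a')$ within the ball. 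Picking the component of that boundary which is "innermost" around $x_0$, connectedness and the Jordan–Brouwer separation theorem (cited here as \cite{two_components}) give the bounding hypersurface. I would need to be careful that this component is itself connected — if not, one passes to the connected component of the complement region containing $x_0$ and takes its frontier, which is connected and is one of the $\mathbf{H}_i^{n-1}$'s. Once these three properties (convergence of values, bounding, diameters $\to 0$) are verified, $F$ satisfies Definition~\ref{defn_L_function} and the proof is complete.
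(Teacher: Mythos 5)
Your necessity argument is correct and essentially reproduces the paper's (the paper derives the contradiction from $H^{n-1}_M$ being a whole component of its level set, you derive it from $a_i\neq F(x_0)$ via Proposition~\ref{proposition: critical point of L}; either works). The sufficiency direction, however, has a genuine gap already at its first step. You claim that after shrinking one can find a closed annular shell centred at $x_0$ which $F^{-1}(F(x_0))$ does not meet, and the trapping of nearby level sets rests on this. Quasi-isolation does not provide such a shell: the other connected components of the level set may accumulate at $x_0$. For instance, the zero set of a $C^\infty$ function can consist of $x_0$ together with a sequence of slit spheres of radii $1/i$ centred at $x_0$; then $\{x_0\}$ is a connected component of the level set, yet every shell $\{\delta_1\le\rho(x,x_0)\le\delta_2\}$ with small $\delta_2$ meets it. The paper avoids round shells entirely: it takes the connected component $U^i\ni x_0$ of the open band $\{|F-F(x_0)|<\varepsilon_i\}$, intersects it with a fixed ball $B_\delta$, and shows by a nested-compacta argument ($\bigcap_i \Cl{U^i_\delta}=\{x_0\}$, followed by the clopen argument giving $U^i_\delta=U^i$) that for large $i$ the entire component $U^i$ lies in $\Int B_\delta$; the separating object is then its frontier $\Fr U^i\subset F^{-1}(-\varepsilon_i)\cup F^{-1}(\varepsilon_i)$, not a shell and not a single level set.

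The second gap is exactly the point you yourself call delicate, and your sketch does not resolve it. The assertion that any path from $x_0$ out of the small ball must cross $F^{-1}(a')$ for one fixed nearby value $a'$ is unsupported: nothing forces $F$ to attain the value $a'$ along such a path (when $x_0$ is a local maximum, $F^{-1}(a')$ with $a'>F(x_0)$ is even empty near $x_0$), and for a general quasi-isolated point no single side can be fixed in advance --- which is why the paper separates with the frontier of the band component, lying in the union of the two level sets $F^{-1}(\pm\varepsilon_i)$. Your fallback, ``pass to the component of the complement containing $x_0$ and take its frontier, which is connected,'' also fails: that component is just $U^i$ again, and its frontier is in general a disjoint union of several connected closed hypersurfaces. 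Extracting from this disconnected frontier a single connected component $N^0$ whose inner region contains $x_0$ is the real work of the sufficiency proof; the paper does it via Lemma~\ref{lemma_complements_intersection} and Corollaries~\ref{corr_intersection} and~\ref{corr_component}, which show that the intersection of the chosen complementary components of finitely many pairwise disjoint closed hypersurfaces is connected, so that if $x_0$ lay outside every component of $\Fr U^i$ it could be joined to infinity without meeting $\Fr U^i$, contradicting boundedness of $U^i$. Nothing in your proposal substitutes for these two steps, so the ``if'' direction of Theorem~\ref{th_surf_family} is not established by it.
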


The proof of this theorem is given in section~\ref{sec:th_surf_family_proof}.

\begin{corr}\label{corr:non-isolated_crit_point}
Suppose $z = F({\bf x})$  is a $C^n$-smooth function defined in a domain  $G\subset \rr^n$. Denote by $\Sigma(F({\bf x}))$ the set of critical points of the function $z = F({\bf x})$.

Let ${\bf y} \in G$ be a quasi-isolated point for the function  $z = F({\bf x})$. Assume also that $\mathbf{y}$ is not an isolated point of the level set $F^{-1}(F({\bf y}))$.

Then $\mathbf{y} \in \Sigma(F({\bf x}))$ and $\mathbf{y}$ is not an isolated point of the set $\Sigma(F({\bf x}))$.
\end{corr}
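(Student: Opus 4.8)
The plan is to combine Theorem~\ref{th_surf_family} with Proposition~\ref{proposition:local extremum} and a connectedness argument on the level set. First I would observe that since ${\bf y}$ is quasi-isolated for $F$ and $F \in C^n(G)$, Theorem~\ref{th_surf_family} applies and gives that $F$ is an $\bf{L}$-function for ${\bf y}$; by Proposition~\ref{proposition: critical point of L} this immediately yields ${\bf y} \in \Sigma(F({\bf x}))$, which is the first assertion. (Alternatively Proposition~\ref{proposition:quasi_isolated-point is critical point} gives the same conclusion directly.) This part is routine.

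The substantive part is showing that ${\bf y}$ is not an isolated point of $\Sigma(F)$. The hypotheses say that ${\bf y}$ is quasi-isolated but \emph{not} an isolated point of the level set $L = F^{-1}(F({\bf y}))$; write $C$ for the connected component of $L$ containing ${\bf y}$, so by quasi-isolation $C = \{{\bf y}\}$ as a connected component — wait, that is exactly the definition of quasi-isolated, $\{{\bf y}\}$ is a connected component of $L$, yet ${\bf y}$ is not isolated in $L$. So there is a sequence of points ${\bf y}_k \in L \setminus \{{\bf y}\}$ with ${\bf y}_k \to {\bf y}$, and each ${\bf y}_k$ lies in a connected component $C_k$ of $L$ different from $\{{\bf y}\}$, hence $C_k$ is not a single point (if $C_k$ were an isolated point it would still be a component, but the key is to use these nearby components). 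The plan is: by Proposition~\ref{proposition:local extremum}, ${\bf y}$ is \emph{not} a local extremum of $F$ (since ${\bf y}$ is not an isolated point of its level set), so in every neighborhood $U$ of ${\bf y}$ there are points where $F > F({\bf y})$ and points where $F < F({\bf y})$. I would then pick a small ball $U$, take such points ${\bf y}^+, {\bf y}^-$ and join them by a path in $U$; by the intermediate value theorem the path meets $L$, giving a point ${\bf z} \in L \cap U$, ${\bf z} \neq {\bf y}$. The component of ${\bf z}$ in $L$ is again a single point $\{{\bf z}\}$ only if ${\bf z}$ is itself quasi-isolated — and here is where I would apply Proposition~\ref{proposition:quasi_isolated-point is critical point}: if ${\bf z}$ is quasi-isolated then ${\bf z}$ is critical, landing a critical point near ${\bf y}$; if ${\bf z}$ is not quasi-isolated, its component $C_{\bf z}$ in $L$ is a non-degenerate continuum (a connected set with more than one point) contained in $U$, and I claim such a continuum must contain a critical point of $F$. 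Indeed on a connected level set with more than one point, if every point were regular the implicit function theorem would make $C_{\bf z}$ locally an $(n-1)$-manifold, and one then argues $C_{\bf z}$ itself is a point — contradiction; more carefully, $F$ being constant on the arc-connected (after taking a path component) set forces its directional derivatives along the set to vanish, and some extremal-position argument on $C_{\bf z}$ produces a point where $\grad F = 0$. Either way we get a critical point ${\bf w} \in \Sigma(F) \cap U$ with ${\bf w} \neq {\bf y}$. Shrinking $U$ along a sequence $U \downarrow \{{\bf y}\}$ produces critical points ${\bf w}_j \to {\bf y}$, ${\bf w}_j \neq {\bf y}$, proving ${\bf y}$ is not isolated in $\Sigma(F)$.

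The main obstacle I anticipate is the step asserting that a connected component of the level set $L$ which is not a single point must contain a critical point of $F$: this needs either a clean appeal to the implicit function theorem and the classification of level sets of submersions (a connected closed set that is locally an $(n-1)$-manifold near each of its points, sitting inside $F^{-1}(a)$, could still be large, so I cannot conclude it is a point that way directly) or a more delicate argument. The cleanest route is probably: if $C_{\bf z}$ contained no critical point, then near each ${\bf p} \in C_{\bf z}$ there is a coordinate change making $F$ linear (as in the proof of Proposition~\ref{proposition: critical point of L}), so $C_{\bf z}$ is locally a flat hyperplane piece; then pick the point of $C_{\bf z}$ maximizing distance to ${\bf y}$ (or maximizing some linear functional) and derive a contradiction with $F$ being constant and the hyperplane being genuinely $(n-1)$-dimensional — a point on the "boundary" of such a continuum in that direction cannot have $F$ locally linear and nonconstant on a neighborhood-in-$L$ that is $(n-1)$-dimensional. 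I would want to state this as a small lemma and prove it carefully, since it is the real content; everything else is assembling Theorem~\ref{th_surf_family} and Propositions~\ref{proposition:local extremum}–\ref{proposition: critical point of L}.
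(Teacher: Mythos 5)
Your first assertion (that $\mathbf{y}\in\Sigma(F)$) is fine, but the substantive half of your argument has a genuine gap, and it sits exactly where you suspected. The lemma you propose to ``state and prove carefully'' --- that a connected component of the level set $L=F^{-1}(F(\mathbf{y}))$ containing more than one point must contain a critical point of $F$ --- is simply false: the unit circle is a non-degenerate connected component of a level set of $F(x,y)=x^2+y^2$ and carries no critical points, and the $y$-axis plays the same role for $F(x,y)=x$. Your proposed rescue (take the point of $C_{\mathbf{z}}$ maximizing distance to $\mathbf{y}$ and derive a contradiction with $F$ being locally ``linear'' there) would equally ``prove'' that the circle contains a critical point; at such an extremal point the level hypersurface is merely tangent to a sphere about $\mathbf{y}$, which is no contradiction at all. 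There is also a smaller unjustified step: the component $C_{\mathbf{z}}$ of $\mathbf{z}$ in $L$ need not be contained in your small ball $U$, so even a correct statement about compact components would not apply. The underlying misdirection is that the critical points accumulating at $\mathbf{y}$ need not lie on the level set $L$ at all, so no argument that hunts for them on components of $L$ can succeed.

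The paper's proof avoids this by producing critical points \emph{inside} a compact region rather than on $L$: assuming $\mathbf{y}$ were an isolated critical point with critical-point-free ball $U$, it uses Theorem~\ref{th_surf_family} to get a regular value $a$ and a connected hypersurface $H^{n-1}\subset U\cap F^{-1}(a)$ bounding $\mathbf{y}$, with $\Cl{W}\subset U$ for the inner component $W$. Then $F$ attains its maximum $M$ and minimum $m$ on the compact set $\Cl{W}$; since $F$ is not constant there, at least one of $m,M$ differs from $a$, hence is attained in the interior $W$, and every such interior extremum is a critical point lying in $U$. If that extremal value differs from $F(\mathbf{y})$ these critical points are distinct from $\mathbf{y}$; if it equals $F(\mathbf{y})$, the hypothesis that $\mathbf{y}$ is not isolated in its level set supplies a second point of $F^{-1}(F(\mathbf{y}))\cap W$, again a critical point distinct from $\mathbf{y}$. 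Shrinking $U$ then contradicts isolation. If you want to keep your path/intermediate-value observations, they can motivate the picture, but the step from ``points of $L$ near $\mathbf{y}$'' to ``critical points near $\mathbf{y}$'' must go through such a compactness/extremum argument, not through critical points on $L$ itself.
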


{\it Proof.}
Observe that $\mathbf{y}$ is a critical point of $F$ according to Proposition~\ref{proposition:quasi_isolated-point is critical point}.

Let $\mathbf{y}$ be an isolated critical  point. Then there exists an $\varepsilon > 0$ such that the open ball $U = \{ \mathbf{x} \in \bR^n \,|\, \rho(\mathbf{x}, \mathbf{y}) < \varepsilon \} \subset G$ does not contain other critical points of $F$.

Since $\mathbf{y}$ is quasi-isolated, it follows from Theorem~\ref{th_surf_family} that $F$ is an $\bf{L}$-function  for $\mathbf{y}$. Hence there exist a regular value $a$ of $F$ and a hypersurface $H^{n-1} \subset U \cap F^{-1}(a)$, such that $\mathbf{y}$ is contained in the inner component $W$ of $\bR^n \setminus H^{n-1}$. It is easy to see that $\Cl{W} \subset U$. Observe also that $\mathbf{y} \notin F^{-1}(a)$	 because $\mathbf{y}$ is critical point of $F$ and can not be contained in a regular level set.

The compact set $\Cl{W}$ has the interior $W$ and the frontier $H^{n-1}$. The function $F$ is continuous on $\Cl{W}$, so it achieves its maximum and minimum values on $\Cl{W}$. Let
\[
M = \max_{\mathbf{x} \in \Cl{W}} F(\mathbf{x})\,, \quad
m = \min_{\mathbf{x} \in \Cl{W}} F(\mathbf{x})\,.
\]

If $m = M$, then $F$ is constant on $\Cl{W}$, which is impossible, since $H^{n-1} \subset F^{-1}(a)$ and $\mathbf{y} \notin F^{-1}(a)$. Therefore, either $m \neq a$, or $M \neq a$.

Let us suppose that $m \neq a$.

Obviously, $\emptyset \neq F^{-1}(m) \cap \Cl{W} \subset W$ and $(F^{-1}(m) \cap \Cl{W}) \subset \Sigma(F({\bf x}))$ since every point of this set is a local minimum of $F$.
Therefore, if $F(\mathbf{y}) \neq m$ then $U$ contains other critical points of $F$ distinct from $\mathbf{y}$.
If $F(\mathbf{y}) = m$, then the set $F^{-1}(m) \cap \Cl{W} = F^{-1}(F(\mathbf{y})) \cap W$ is the subset of $\Sigma(F({\bf x}))$ and also contains more than one point because $\mathbf{y}$ is not an isolated point of its level set.
This contradicts to our initial assumption that $\mathbf{y}$ is an isolated critical point of $F$.

The case $M \neq a$ is considered similarly.

From the arbitrariness in the choice of $\varepsilon > 0$ we conclude that $\mathbf{y}$ is not an isolated point of the set $\Sigma(F({\bf x}))$.
$\square$

\begin{rem}
With the help of technique from~\cite{Dancer} it can be proved that Corollary~\ref{corr:non-isolated_crit_point} is valid for $F \in C^r(G)$, $r \geq 2$. But this is outside the scope of the current discussion.
\end{rem}

\medskip

Now we shall derive some consequences from Theorem~\ref{th_surf_family}

Let $G$ be a domain in $\bR^n$ and let $F \in C^2(G)$. Consider the \emph{gradient system} of $F$ on $G$
\[
\frac{dx}{dt} = - \grad F(x)
\]
where
\[
\grad F(x) = \left( \frac{\partial F}{\partial x_1}, \ldots, \frac{\partial F}{\partial x_n} \right)^T \,.
\]

\begin{theorem}\label{theorem_gradient_system}
Let $G$ be a domain in $\bR^n$. Suppose $F \in C^n(G)$ and $x_0 \in G$ be a connected component of the level set $F^{-1}(F(x_0))$.

Then either the gradient system of $F$ or of $-F$ on $G$ is Lyapunov stable in $x_0$.
\end{theorem}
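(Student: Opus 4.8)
The plan is to recognise the hypothesis as a quasi-isolatedness statement, invoke Theorem~\ref{th_surf_family} to produce generating hypersurfaces for both $F$ and $-F$, and then decide between the two gradient systems by a short sign computation combined with a pigeonhole argument. The genuine point is that one cannot commit to a sign in advance.

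First I would note that ``$x_0$ is a connected component of $F^{-1}(F(x_0))$'' means precisely that $x_0$ is a quasi-isolated point of $F$ in the sense of Definition~\ref{defn_quasi_isolated-point}. Since $F$ and $-F$ have the same level sets, $x_0$ is quasi-isolated for $-F$ as well, and both functions lie in $C^n(G)$; so by Theorem~\ref{th_surf_family} both $F$ and $-F$ are $\mathbf L$-functions for $x_0$. Fix a sequence $\{{\bf H}^{n-1}_i\}$ for $F$ as in Definition~\ref{defn_L_function}: each ${\bf H}^{n-1}_i$ is a connected component of $F^{-1}(a_i)$ with $a_i$ a regular value of $F$, $a_i\to F(x_0)$, each ${\bf H}^{n-1}_i$ bounds $x_0$, and $\diam {\bf H}^{n-1}_i\to 0$. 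Since $(-F)^{-1}(-a_i)=F^{-1}(a_i)$ and $-a_i\to -F(x_0)$, the very same sequence $\{{\bf H}^{n-1}_i\}$ also serves as a generating sequence for the $\mathbf L$-function $-F$. On ${\bf H}^{n-1}_i$ the field $\grad F$ never vanishes (because $a_i$ is a regular value) and is everywhere normal to ${\bf H}^{n-1}_i$, so by connectedness it points either everywhere towards the internal component or everywhere away from it; let $\varepsilon_i=\varepsilon({\bf H}^{n-1}_i)\in\{+1,-1\}$ be the corresponding sign, so that $\varepsilon_i\grad F$ is the inward normal field on ${\bf H}^{n-1}_i$, as in the discussion preceding the corollary to Theorem~\ref{th:lyapunov_from_ge0}. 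Note that $\varepsilon_{-F}({\bf H}^{n-1}_i)=-\varepsilon_i$, because $\grad(-F)=-\grad F$.

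Next I would compute the relevant scalar products. The gradient system of $F$ is $\dot x=-\grad F(x)$, and that of $-F$ is $\dot x=-\grad(-F)(x)=\grad F(x)$. Hence on ${\bf H}^{n-1}_i$ the function $\tilde S_i$ of the corollary equals
\[
\langle \varepsilon_i\grad F,\,-\grad F\rangle=-\varepsilon_i\|\grad F\|^2
\]
for the gradient system of $F$, and
\[
\langle \varepsilon_{-F}({\bf H}^{n-1}_i)\,\grad(-F),\,\grad F\rangle=\langle(-\varepsilon_i)(-\grad F),\,\grad F\rangle=\varepsilon_i\|\grad F\|^2
\]
for the gradient system of $-F$, and both right-hand sides are everywhere nonzero on ${\bf H}^{n-1}_i$. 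Thus on each ${\bf H}^{n-1}_i$ we have: if $\varepsilon_i=-1$, then $\tilde S_i\ge 0$ there for the gradient system of $F$; if $\varepsilon_i=+1$, then $\tilde S_i\ge 0$ there for the gradient system of $-F$. By the pigeonhole principle at least one of the index sets $\{i:\varepsilon_i=+1\}$, $\{i:\varepsilon_i=-1\}$ is infinite, and a subsequence of $\{{\bf H}^{n-1}_i\}$ indexed by an infinite such set still satisfies Definition~\ref{defn_L_function} (for $F$, respectively for $-F$), since its three conditions only concern convergence of the regular values to $F(x_0)$, resp. $-F(x_0)$, the bounding property, and the vanishing of the diameters, all of which pass to subsequences. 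Applying the corollary to Theorem~\ref{th:lyapunov_from_ge0} to that subsequence and the corresponding gradient system yields Lyapunov stability of $x_0$ for the gradient system of $F$ or of $-F$, as claimed.

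I expect the only delicate point to be the observation that the sign $\varepsilon_i$ need not be constant in $i$: a quasi-isolated point need not be a local extremum of $F$, so $\grad F$ may point inward on some ${\bf H}^{n-1}_i$ and outward on others. This forces the extraction of an infinite subsequence along which the sign is constant rather than a choice of system made at the outset, and it is precisely why the conclusion is an alternative rather than a statement about $F$ alone. (A minor technical remark: for $n\ge 2$ one has $\grad F\in C^{n-1}(G)$, in particular $C^1$, so \eqref{eq_system_1} does apply to the gradient systems.)
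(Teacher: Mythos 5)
Your proposal is correct and follows essentially the same route as the paper: invoke Theorem~\ref{th_surf_family} to get the sequence of hypersurfaces, use that on each connected $H^{n-1}_i$ the nonvanishing normal field $\grad F$ gives a definite sign, and then extract (by pigeonhole) an infinite constant-sign subsequence to which Theorem~\ref{th:lyapunov_from_ge0} applies for the gradient system of $F$ or of $-F$. The only cosmetic difference is that you route the sign computation through the corollary with $\varepsilon_i\grad F$, while the paper works directly with the unit inward normal $\vec{N_i}$ and the sets $S_{+}$, $S_{-}$, which is the same argument.
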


\begin{proof}
It follows from Theorem~\ref{th_surf_family} that we can select a sequence $\{a_i\}_{i \in \bN}$ of regular values of $F$ that converges to $F(x_0)$ and a sequence $\{H^{n-1}_i\}_{i \in \bN}$ of nested connected hypersurfaces that converge to $x_0$ and such that each $H^{n-1}_i$ is a connected component of $F^{-1}(F(a_i))$.

Let $\vec{N_i}({\bf x})$, ${\bf x} \in H^{n-1}_i$, be a unique normal vector field of unit length on $H^{n-1}_i$ such that it directs towards the internal component of the complement $\bR^n \setminus H^{n-1}_i$. It is obvious that vectors $\grad F({\bf x})$ and $\vec{N_i}({\bf x})$ are collinear for all ${\bf x} \in H^{n-1}_i$, $i \in \bN$.

Since each $a_i$ is regular value of $F$, then $\grad F({\bf x}) \neq 0$ for all ${\bf x} \in H^{n-1}_i$, $i \in \bN$.
Therefore, $S({\bf x}) = <-\grad F({\bf x}), \vec{N_i}({\bf x})> \neq 0$ for all ${\bf x} \in H^{n-1}_i$, $i \in \bN$.
Function $S({\bf x})$ is nonzero and continuous on each connected set $H^{n-1}_i$, consequentely it is sign-definite on each $H^{n-1}_i$.

Let us consider two subsequences of $\{H^{n-1}_i\}_{i \in \bN}$
\[
\begin{aligned}
S_{+} & = \{H^{n-1}_i \;|\; S({\bf x}) > 0 \textrm{ on } H^{n-1}_i \}\,, \\
S_{-} & = \{H^{n-1}_i \;|\; S({\bf x}) < 0 \textrm{ on } H^{n-1}_i \}\,.
\end{aligned}
\]
At least one of these subsequences contains an infinite number of elements.

If $|S_{+}| = \infty$, then we can take $S_{+}$ and apply Theorem~\ref{th:lyapunov_from_ge0} to the system $\frac{d {\bf x}}{dt} = - \grad F({\bf x})$.

If $|S_{-}| = \infty$, then we take $S_{-}$ and observe that $<-\grad (-F({\bf x})), \vec{N_i}({\bf x})> = -S({\bf x}) > 0$ on every element $H^{n-1}_i$ of $S_{-}$. Therefore, applying Theorem~\ref{th:lyapunov_from_ge0} to the system $\frac{d {\bf x}}{dt} = - \grad (-F({\bf x}))$
we conclude that this system is Lyapunov stable in $x_0$.
\end{proof}

Let $G$ be a domain in $\bR^{2n}$ and let $F \in C^2(G)$ where $F = F(y, z)$ with $y, z \in \bR^n$. Consider the \emph{Hamiltonian system} with $n$ degrees of freedom on $G$
\[
\begin{aligned}
\frac{dy}{dt} & = \frac{\partial F}{\partial z} \\
\frac{dz}{dt} & = -\frac{\partial F}{\partial y} \,,
\end{aligned}
\]
where
\[
\begin{aligned}
\frac{\partial F}{\partial y} & = \left( \frac{\partial F}{\partial y_1}, \ldots, \frac{\partial F}{\partial y_n} \right)^T \\
\frac{\partial F}{\partial z} & = \left( \frac{\partial F}{\partial z_1}, \ldots, \frac{\partial F}{\partial z_n} \right)^T \,.
\end{aligned}
\]

\begin{theorem}\label{theorem_Hamiltonian_system}
Consider a domain $G \in \bR^{2n}$.
Let $F \in C^{2n}(G)$ and $x_0 \in G$ be a connected component of the level set $F^{-1}(F(x_0))$.

Then the corresponding Hamiltonian system on $G$ is Lyapunov stable in $x_0$.
\end{theorem}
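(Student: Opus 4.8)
The plan is to reduce the assertion to Theorem~\ref{th:lyapunov_from_ge0} by producing a sequence of nested hypersurfaces converging to $x_0$ along which the Hamiltonian vector field is everywhere tangent, so that every scalar product $S_i(\mathbf{x})$ vanishes identically and is, in particular, non-negative.

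First I would note that $\{x_0\}$ being a connected component of $F^{-1}(F(x_0))$ is exactly the statement that $x_0$ is a quasi-isolated point of $F$ (Definition~\ref{defn_quasi_isolated-point}); by Proposition~\ref{proposition:quasi_isolated-point is critical point} it is then a critical point of $F$, hence $\grad F(x_0)=0$ and $x_0$ is an equilibrium of the Hamiltonian system, so the conclusion is meaningful. Since $G \subset \bR^{2n}$ and $F \in C^{2n}(G)$, Theorem~\ref{th_surf_family} (with $2n$ playing the role of $n$) shows that $F$ is an $\mathbf{L}$-function for $x_0$. Proceeding exactly as in the proof of Theorem~\ref{theorem_gradient_system}, I would extract from Definition~\ref{defn_L_function} a sequence $\{a_i\}_{i\in\bN}$ of regular values of $F$ with $a_i \to F(x_0)$ and a sequence $\{{\bf H}_i^{2n-1}\}_{i\in\bN}$ of nested connected hypersurfaces converging to $x_0$, each ${\bf H}_i^{2n-1}$ being a connected component of $F^{-1}(a_i)$.

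Next comes the only computation. Writing $\mathbf{x}=(y,z)$ with $y,z\in\bR^n$, the Hamiltonian vector field is $\myvectorfield(\mathbf{x}) = \bigl(\partial F/\partial z,\, -\partial F/\partial y\bigr)$ while $\grad F(\mathbf{x}) = \bigl(\partial F/\partial y,\, \partial F/\partial z\bigr)$, so that
\[
\langle \grad F(\mathbf{x}),\, \myvectorfield(\mathbf{x}) \rangle \;=\; \frac{\partial F}{\partial y}\cdot\frac{\partial F}{\partial z} \;-\; \frac{\partial F}{\partial z}\cdot\frac{\partial F}{\partial y} \;=\; 0
\]
at every point of $G$ --- the familiar fact that $F$ is a first integral of its own flow. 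Because each $a_i$ is a regular value, $\grad F(\mathbf{x})\neq 0$ on ${\bf H}_i^{2n-1}$ and is collinear with the inward unit normal $\vec{N_i}(\mathbf{x})$; hence $S_i(\mathbf{x}) = \langle \vec{N_i}(\mathbf{x}),\, \myvectorfield(\mathbf{x})\rangle = 0$ for every $\mathbf{x}\in{\bf H}_i^{2n-1}$ and every $i$.

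Since $S_i(\mathbf{x}) \equiv 0 \ge 0$ for all $i$, Theorem~\ref{th:lyapunov_from_ge0} applies and yields Lyapunov stability of $x_0$ for the Hamiltonian system. I do not anticipate a real obstacle: the two points needing attention are that the existence machinery (Theorem~\ref{th_surf_family}) really does work in ambient dimension $2n$, which is precisely why the hypothesis demands $C^{2n}$ rather than $C^2$ smoothness, and that the degenerate situation $S_i\equiv 0$ is genuinely covered by Theorem~\ref{th:lyapunov_from_ge0} --- and it is, because Lemma~\ref{th:lyapunov_boundness_surface} is proved under the hypothesis $S(\mathbf{x})\ge 0$ with no transversality of the flow to the bounding hypersurface assumed anywhere in the argument.
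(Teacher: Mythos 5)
Your proposal is correct and follows essentially the same route as the paper: invoke Theorem~\ref{th_surf_family} to obtain the nested hypersurfaces lying in regular level sets of $F$, observe that the Hamiltonian field is tangent to level surfaces so every $S_i(\mathbf{x})\equiv 0$, and conclude by Theorem~\ref{th:lyapunov_from_ge0}. Your added remarks (the explicit computation $\langle\grad F,\myvectorfield\rangle=0$, the collinearity of $\grad F$ with $\vec{N_i}$ on regular level components, and the note that $x_0$ is indeed an equilibrium) only make explicit what the paper states as ``known.''
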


\begin{proof}
Let us denote
\[
{\bf x} = (y, z) \,, \quad \myvectorfield({\bf x}) = \left( \left(\frac{\partial F}{\partial z}\right)^T, \left(-\frac{\partial F}{\partial y}\right)^T \right)^T \,.
\]
In this notation our system has the form~\eqref{eq_system_1}.

We make use of Theorem~\ref{th_surf_family} and select a sequence $\{H^{n-1}_i\}_{i \in \bN}$ of nested connected hypersurfaces
that converges to $x_0$ and such that each $H^{n-1}_i$ is contained in a level set of $F$. Let $\vec{N_i}({\bf x})$, ${\bf x} \in H^{n-1}_i$, be a normal vector field of unit length on $H^{n-1}_i$ such that it directs towards the internal component of the complement $\bR^{2n} \setminus H^{n-1}_i$.

It is known that the trajectories of Hamiltonian system lie on the level surfaces of $F$. Therefore $S_i({\bf x}) = \;<\vec{N_i}({\bf x}), \myvectorfield({\bf x})>\; = 0$ for every ${\bf x} \in H^{n-1}_i$, $i \in \bN$, and we are in the conditions of Theorem~\ref{th:lyapunov_from_ge0}.
Applying it we conclude that $x_0$ is stable in the sense of Lyapunov for our Hamiltonian system.
\end{proof}

\begin{rem}
So, it turns out that in order to check Lyapunov stability of gradient or Hamiltonian systems at a critical point $x_0 \in G$ of a function $F \in C^n(G)$ it suffices to verify that this point is the connected component of its level set $F^{-1}(F(x_0))$.
\end{rem}

%

\section{Proof of Theorem~\ref{th_surf_family}.}
\label{sec:th_surf_family_proof}

Before we proceed with the proof of theorem, let us consider some necessary auxiliary statements.

\subsection{On closed hypersurfaces in $\bR^{n}$.}

\begin{defn}[see.~\cite{Kuratowski}]
Let $X$ be a metric space and $A \subset X$. We denote by $LC(A)$ a set of all $x \in \Cl{A}$ with the following property: there exists an open neighbourhood $G$ of $x$ of an arbitrary small diameter such that $G \cap A$ is connected.
\end{defn}

Let $\rho$ be a metrics in $\bR^{n}$. We designate by $U_{\varepsilon}(A)$ the $\varepsilon$-neighbourhood of a set $A \subset \bR^{n}$:
\[
U_{\varepsilon}(A) = \{ x \in \bR^n \,|\, \inf_{y \in A}(\rho(x, y) < \varepsilon \} \,.
\]

\begin{lemma}\label{lemma_connected_collar}
Let $W$ be a domain in $\bR^{n}$. Suppose the frontier $R = \Fr(W)$ is connected and $R \subset LC(W)$.

Then $W_{\varepsilon} = W \cap U_{\varepsilon}(R)$ is connected for every $\varepsilon > 0$.
\end{lemma}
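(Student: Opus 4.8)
The plan is to argue by contradiction: I assume $W_\varepsilon$ is disconnected and use the hypotheses to manufacture a disconnection of $R$, contradicting the assumption that $R$ is connected. So suppose $W_\varepsilon = A \sqcup B$ with $A,B$ nonempty, disjoint, and open in $\bR^n$ (they are open in $W_\varepsilon$, which is itself open). A first, easy observation is that $R \subset \overline{W_\varepsilon}$: every $r \in R \subset \overline W$ is a limit of points $w_k \in W$, and once $\rho(w_k,r) < \varepsilon$ we have $w_k \in U_\varepsilon(R)$, hence $w_k \in W_\varepsilon$. Since $\overline{W_\varepsilon} = \overline A \cup \overline B$, this yields $R \subset \overline A \cup \overline B$.

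The argument then rests on two sub-claims describing how $R$ sits relative to $\overline A$ and $\overline B$. The first — and this is the only place the hypothesis $R \subset LC(W)$ enters — is that $R \cap \overline A \cap \overline B = \emptyset$. Given $r \in R$, local connectedness furnishes an open neighbourhood $G \ni r$ with $\diam G < \varepsilon$ and $G \cap W$ connected; this set is nonempty because $r \in \overline W$, and the diameter bound forces $G \cap W \subset U_\varepsilon(R)$, so $G \cap W$ is a connected subset of $W_\varepsilon = A \sqcup B$ and therefore lies entirely in $A$ or entirely in $B$. If $G \cap W \subset A$, then since $B \subset W$ we get $G \cap B = G \cap W \cap B \subset A \cap B = \emptyset$, so the neighbourhood $G$ of $r$ misses $B$ and $r \notin \overline B$; symmetrically in the other case. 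Hence each $r \in R$ lies in at most one of $\overline A$, $\overline B$. The second sub-claim is that $R$ meets \emph{both} $\overline A$ and $\overline B$. Pick $a \in A$; since $a \in U_\varepsilon(R)$ there is $r \in R$ with $\|a-r\| < \varepsilon$, and the straight segment $\gamma(t) = (1-t)a + tr$, $t \in [0,1]$, stays in $U_\varepsilon(\{r\}) \subset U_\varepsilon(R)$. Let $t^* = \sup\{\, t : \gamma([0,t]) \subset W \,\}$, which is positive because $W$ is open and $a \in W$. Then $\gamma([0,t^*)) \subset W \cap U_\varepsilon(R) = W_\varepsilon$; it is connected and contains $a \in A$, so $\gamma([0,t^*)) \subset A$; and the exit point $s = \gamma(t^*)$ lies in $\overline W \setminus W = R$ (by maximality of $t^*$ if $t^* < 1$, or because $s = r \in R$ if $t^* = 1$) and, being a limit of points of $A$, lies in $\overline A$. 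So $s \in R \cap \overline A$, and symmetrically $R \cap \overline B \neq \emptyset$.

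Combining the two sub-claims with $R \subset \overline A \cup \overline B$, the sets $R \cap \overline A$ and $R \cap \overline B$ are nonempty, disjoint, closed in $R$, and cover $R$ — a disconnection of $R$, contradicting the hypothesis. Therefore $W_\varepsilon$ is connected. The only genuinely delicate point is the first sub-claim: a priori $\overline A$ and $\overline B$ could share points of $R$, in which case $R \cap \overline A$ and $R \cap \overline B$ would overlap and fail to separate $R$; ruling this out is exactly what $R \subset LC(W)$ buys, through the small connected neighbourhoods. Everything else is routine point-set topology (a connected set contained in a disjoint union of two open sets lies in one of them; a segment of length $< \varepsilon$ based at a point of $R$ never leaves $U_\varepsilon(R)$; $\Fr(W) = \overline W \setminus W$ for open $W$).
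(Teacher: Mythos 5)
Your proof is correct, but it runs along a genuinely different route than the paper's. The paper argues directly: given two arbitrary points $x_1, x_2 \in W_{\varepsilon}$, it drops straight segments from them to nearest points $x_1^0, x_2^0$ of $R$, covers the connected frontier $R$ by small neighbourhoods $G(x)$ (with $\diam G(x) < \varepsilon$ and $G(x) \cap W$ connected, using $R \subset LC(W)$), invokes the Kuratowski chaining property of open covers of connected spaces to join $x_1^0$ to $x_2^0$ by a finite chain $G(y_1), \ldots, G(y_s)$, and checks that consecutive sets $V(y_i) = G(y_i) \cap W$ meet (because $G(y_i) \cap G(y_{i+1})$ is a neighbourhood of a frontier point and so meets $W$); the union of the two segments and the $V(y_i)$ is then an explicit connected subset of $W_{\varepsilon}$ containing $x_1$ and $x_2$. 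You instead argue by contradiction, transporting a hypothetical disconnection $W_{\varepsilon} = A \sqcup B$ to a disconnection $R = (R \cap \overline{A}) \sqcup (R \cap \overline{B})$: the hypothesis $R \subset LC(W)$ enters only to show that no point of $R$ lies in both closures (the small connected piece $G \cap W$ sits in exactly one of $A$, $B$), while nonemptiness of both intersections comes from following a straight segment, of length less than $\varepsilon$, from a point of $A$ (resp.\ $B$) to a nearby point of $R$ and taking the exit point from $W$. Your version is somewhat more economical — it avoids the chain lemma for open covers and makes transparent exactly where each hypothesis ($R$ connected, $R \subset LC(W)$, $W$ open) is used — whereas the paper's version is constructive, producing a concrete connected set joining any two prescribed points of $W_{\varepsilon}$; for the way the lemma is used later (only connectedness of $W_{\varepsilon}$ is needed, e.g.\ in Corollary~\ref{corr_conectedness} and Lemma~\ref{lemma_complements_intersection}), either argument suffices.
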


\begin{proof} Fix $\varepsilon > 0$.

Let $x_1$, $x_2 \in W_{\varepsilon}$. Let $x^0_1$ and $x^0_2$ be the closest points of $R$ to $x_1$ and $x_2$ accordingly. Let also $\gamma_1, \gamma_2 : \bR \to \bR^n$ be continuous curves which comply with the correlations
\begin{itemize}
	\item $\gamma_1(0) = x_1$, $\gamma_1(1) = x^0_1$, $\gamma_2(0) = x_2^0$, $\gamma_2(1) = x_2$;
	\item $\gamma_1[0, 1) \cup \gamma_2(0, 1] \subset W_{\varepsilon}$.
\end{itemize}
We can take for instance $\gamma_1(t) = (1-t) x_1 + t x_1^0$, $\gamma_2(t) = (1-t) x_2^0 + t x_2$, $t \in I$.

We use the inclusion $R \subset LC(W)$ from condition of lemma and choose for every $x \in R$ an open neighbourhood $G(x)$ in $\bR^n$ which is contained in $U_{\varepsilon}(x)$ and such that the set $V(x) = G(x) \cap W$ is connected. Then $V(x) \subset W_{\varepsilon}$, $x \in R$.

It is known (see~\cite{Kuratowski}) that for an open cover of a connected space we can connect every pair of points of this space by a finite chain which consists of elements of this cover. Thus, with a pair of points $x_1^0$, $x_2^0 \in R$ one can associate a finite set of points $y_1, \ldots, y_s$ such that $x_1^0 \in G(y_1)$, $x_2^0 \in G(y_s)$ and
\[
G(y_i) \cap G(y_{i+1}) \cap R \neq \emptyset \text{ for all } i \in \{1, \ldots, s-1\} \,.
\]

Since $x_1^0 \in G(y_1)$, we have that $\gamma_1[0, 1) \cap G(y_1) \neq \emptyset$. And from $\gamma_1[0, 1) \subset W_{\varepsilon}$ it follows that $\gamma_1[0, 1) \cap V(y_1) \neq \emptyset$. Similarly, $\gamma_2(0, 1] \cap V(y_s) \neq \emptyset$.

Let $i \in \{1, \ldots, s-1\}$. The nonempty set $G(y_i) \cap G(y_{i+1}) \cap R$ is contained in $\Fr W$. Consequently, its neighbourhood $G(y_i) \cap G(y_{i+1})$ in $\bR^n$ intersects $W$ and $V(y_i) \cap V(y_{i+1}) \neq \emptyset$.

Thus, all members of the union
\[
R = \gamma_1[0, 1) \cup V(y_1) \cup \ldots \cup V(y_s) \cup \gamma_2(0, 1]
\]
are connected and each pair of adjacent sets in this sequence have a common point. Therefore, $R$ is connected set. Furthermore, by construction it lies in $W_{\varepsilon}$ and contains points $x_1 = \gamma_1(0)$ and $x_2 = \gamma_2(1)$.

From the arbitrariness of a choice of $x_1$, $x_2 \in W_{\varepsilon}$ it follows that the set $W_{\varepsilon}$ is connected.
\end{proof}

\begin{corr}\label{corr_conectedness}
Let $N$ be a connected closed hypersurface in $\bR^n$. Let $W$ be a connected component of the complement $\bR^n \setminus N$.

Then the intersection $W_{\varepsilon} = W \cap U_{\varepsilon}(N)$ is connected for every $\varepsilon > 0$.
\end{corr}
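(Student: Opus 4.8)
The plan is to obtain the corollary as a direct application of Lemma~\ref{lemma_connected_collar} to the domain $W$. Since $W$ is a connected component of the open set $\bR^n\setminus N$, it is itself open and connected, hence a domain. So it remains only to verify the two hypotheses of the lemma for $W$: that the frontier $R=\Fr(W)$ is connected and that $R\subset LC(W)$. Once this is done, the lemma yields that $W_{\varepsilon}=W\cap U_{\varepsilon}(N)=W\cap U_{\varepsilon}(R)$ is connected for every $\varepsilon>0$, which is the assertion.

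First I would identify the frontier of $W$ with $N$. Because $W$ is open and disjoint from $N$, a point of $\Fr(W)=\Cl{W}\setminus W$ cannot lie in $W$; nor can it lie in any other component of $\bR^n\setminus N$, since such a component is open and disjoint from $W$. Hence $\Fr(W)\subseteq N$. For the reverse inclusion I would use that $N$, being a connected closed orientable hypersurface (orientability is recorded in the paper right after Definition~\ref{defn_surf}), carries a global unit normal field $\nu$ and therefore, by compactness, admits a two-sided tubular collar, i.e. an embedding $N\times(-\delta_0,\delta_0)\hookrightarrow\bR^n$, $(x,t)\mapsto x+t\nu(x)$. Each of the connected pieces $N\times(0,\delta_0)$ and $N\times(-\delta_0,0)$ lies in a single component of $\bR^n\setminus N$; these two components are distinct, for otherwise the third, remaining component would be a nonempty open proper subset of $\bR^n$ whose frontier is contained in $N$ yet which fails to approach any point of $N$ — a contradiction. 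Thus $\bR^n\setminus N$ consists of exactly these two components, and for every $x\in N$ the points $x+t\nu(x)$ with small $t$ of the appropriate sign belong to $W$; consequently $N\subseteq\Cl{W}$. Together with $\Fr(W)\subseteq N$ this gives $\Fr(W)=N$, which is connected by hypothesis.

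Next I would check $N=\Fr(W)\subseteq LC(W)$. Fix $p\in N$ and $\delta>0$. Take a flattening chart $\phi\colon U\to\bR^n$ for the submanifold $N$ at $p$, with $U$ connected and $\phi(U)$ an open ball, so that $\phi(U\cap N)=\{x_n=0\}\cap\phi(U)$. Then $\phi(U)\setminus\phi(U\cap N)$ has two convex (hence connected) "half-ball" pieces, and by the previous paragraph one of them is carried by $\phi^{-1}$ into $W$; thus, up to sign, $U\cap W=\phi^{-1}\big(\{x_n>0\}\cap\phi(U)\big)$ is open and connected. Shrinking the chart ball about $\phi(p)$ produces an open neighbourhood $G\ni p$ with $\diam G<\delta$ and $G\cap W$ connected, which is precisely the defining property of $LC(W)$ at $p$. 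As $p\in N$ was arbitrary, $N\subseteq LC(W)$, and Lemma~\ref{lemma_connected_collar} applies.

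The only genuinely delicate point is the geometric step in the second paragraph: establishing that the two local sides of $N$ lie in different global components and hence that $\Fr(W)=N$. The essential tools there are the orientability of $N$ quoted in the paper and the existence of a tubular collar for a compact boundaryless hypersurface; everything else — the local-connectedness estimate and the final invocation of the lemma — is routine.
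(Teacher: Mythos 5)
Your proof is correct and shares the paper's skeleton --- both arguments reduce the corollary to Lemma~\ref{lemma_connected_collar} by verifying that $\Fr(W)=N$ is connected and that $N\subset LC(W)$ --- but you verify these hypotheses by a genuinely different route. The paper avoids any global normal field or tubular neighbourhood: it introduces the signed distance-type function $\Psi(x)=\chi(x)\,\rho(x,N)$, partitions $N$ into the relatively open subsets $N_1$ (points near which $\Psi$ keeps one sign) and $N_2$ (points near which both signs occur), and then uses connectedness of $N$ together with the splitting fact from~\cite{two_components} (which forces $\Fr\Cl{W}=N_2\neq\emptyset$) to conclude $N=N_2\subset LC(W)$ and $N=\Fr W$; only local flattening charts are needed. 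You instead use orientability, a global unit normal and a two-sided tubular collar of the compact hypersurface to show that $\bR^n\setminus N$ has exactly two components, each adhering to every point of $N$, and then read off $\Fr(W)=N$ and the one-sided local picture that yields $LC(W)$. Your route is more geometric and perhaps more standard, at the cost of heavier machinery (the collar theorem); the paper's argument is more elementary and self-contained given its Definition~\ref{defn_surf}.

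One step you should make explicit: the assertion that the two collar sides lie in \emph{distinct} components does not follow from the collar alone. If $\bR^n\setminus N$ were connected, both sides would sit in the single component and there would be no ``third, remaining component'' to contradict, so your argument tacitly presupposes that the complement is disconnected --- precisely the separation statement recorded in the paper right after Definition~\ref{defn_surf} (and used in the paper's own proof via $\bR^n\setminus(W\cup N)\neq\emptyset$). Cite it at that point. This is not a cosmetic matter: the distinctness of the two side components is exactly what guarantees in your $LC(W)$ check that $G\cap W$ is a single half-ball; if both local sides of $N$ belonged to $W$, the set $G\cap W$ would be disconnected and the hypothesis of Lemma~\ref{lemma_connected_collar} would fail.
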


\begin{proof}
A closed hypersurface in $\bR^n$ splits $\bR^n$ (see~\cite{two_components}), therefore $\bR^n \setminus (W \cup N) \neq \emptyset$.

Let us consider following functions.
\begin{eqnarray*}
\chi(x) & = &
	\left\{
	\begin{aligned}
		1, &\quad\text{when } x \in W \,, \\
		-1, &\quad\text{otherwise} \,,
	\end{aligned}
	\right.
\\
\Phi(x) & = & \rho(x, N) \,,\\
\Psi(x) & = & \chi(x) \cdot \Phi(x) \,.
\end{eqnarray*}
Obviously, $\Phi$ is continuous in $\bR^n$ and both $\chi$ and $\Psi$ are continuous at all points of the open set $(\bR^n \setminus N) \subseteq (\bR^n \setminus \Fr W)$.

It is clear that $N = \Psi^{-1}(0)$. Furthermore, $\Phi(x) = |\Psi(x)|$, $x \in \bR^n$. So, $\Psi^{-1}(-\varepsilon, \varepsilon) = \Phi^{-1}(-\varepsilon, \varepsilon)$ is open for every $\varepsilon > 0$. Hence $\Psi$ is also continuous at every $x \in N$.

Let us examine two subsets of $N$.
\begin{multline*}
N_1 = \{ x \in N \,|\, \exists\, \varepsilon > 0 : \Psi(y) \leq 0 \quad \forall\, y \in U_{\varepsilon}(x) \} \, \cup \\
	\cup \{ x \in N \,|\, \exists\, \varepsilon > 0 : \Psi(y) \geq 0 \quad \forall\, y \in U_{\varepsilon}(x) \} \,,
\end{multline*}
\[
N_2 = \{ x \in N \,|\, \forall\, \varepsilon > 0 \;\exists\, y_1, y_2 \in U_{\varepsilon}(x) : \Psi(y_1) \Psi(y_2) < 0 \} \,.
\]

Relations $N_1 \cap N_2 = \emptyset$ and $N_1 \cup N_2 = N$ are obviously fulfilled.

By definition of hypersurface for every point $x \in \bR^n$ there are a neighbourhood $V_x$ in $\bR^n$ and a diffeomorphism $\psi_x : V_x \to \bR^n$, such that $\psi_x(V) = \bR^n$, $\psi_x(V \cap N) = \bR^{n-1} \times \{0\}$.

By construction the sign of $\Psi$ is fixed on each connected component of the complement $\bR^n \setminus N$. Therefore every $x \in N$ is contained in one of the sets $N_1$ or $N_2$ together with its neighbourhood $V_x \cap N$, and both $N_1$ and $N_2$ are open in $N$. Moreover, if $x \in N_2$ then exactly one of two components of $V_x \setminus N$ belongs to $W$. Consequently, $N_2 \subset LC(W)$.

According to the condition of this corollary $N$ is connected. Hence either $N_1 = \emptyset$ or $N_2 = \emptyset$.

It is clear that $\Cl{W} \subseteq (W \cup N)$. Moreover $\Fr \Cl{W} \neq \emptyset$ since $\bR^n \setminus (W \cup N) \neq \emptyset$. It is straightforward that $\Fr \Cl{W} = N_2$. So $N_2 \neq \emptyset$, hence $N_1 = \emptyset$ and $N = N_2 \subset LC(W)$. Finally since $\Fr \Cl{W} \subseteq \Fr W \subseteq N$, it follows that $N = \Fr W$ and we can apply lemma~\ref{lemma_connected_collar}.
\end{proof}

\begin{lemma}\label{lemma_complements_intersection}
Let $N_1$ and $N_2$ be closed hypersurfaces in $\bR^n$ such that $N_1 \cap N_2 = \emptyset$.

Let $V_1$ and $V_2$ be connected components of the sets $\bR^n \setminus N_1$ and $\bR^n \setminus N_2$, respectively.

Then $V_1 \cap V_2$ is connected.
\end{lemma}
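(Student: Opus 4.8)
The plan is to reduce the statement to two cases according to the position of $N_2$ relative to the two components of $\bR^n\setminus N_1$, and to settle the essential case by showing that $V_1\cap V_2$ is path-connected, using the connected collars supplied by Corollary~\ref{corr_conectedness}.

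First I would record the elementary observation that, since $N_2$ is connected (it is a closed hypersurface) and disjoint from $N_1$, it lies entirely in one of the two components of $\bR^n\setminus N_1$. If that component is not $V_1$, then $V_1$ is itself a connected set disjoint from $N_2$, hence it lies in a single component of $\bR^n\setminus N_2$; consequently $V_1\cap V_2$ equals all of $V_1$, or is empty, and in either case it is connected. This disposes of everything except the case $N_2\subseteq V_1$, which is the heart of the matter.

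So assume $N_2\subseteq V_1$ and take $x,y\in V_1\cap V_2$ (if this intersection is empty there is nothing to prove). Choose a path $\alpha\colon[0,1]\to V_1$ with $\alpha(0)=x$, $\alpha(1)=y$; this exists because $V_1$ is open and connected, hence path-connected. If $\alpha$ never meets $\bR^n\setminus V_2$ we are done, so suppose otherwise and set $s=\min\,\alpha^{-1}(\bR^n\setminus V_2)$ and $s'=\max\,\alpha^{-1}(\bR^n\setminus V_2)$; this preimage is closed and avoids $0$ and $1$, so $0<s\le s'<1$. Since $\alpha(t)\in V_2$ for $t\in[0,s)$ and for $t\in(s',1]$, both $\alpha(s)$ and $\alpha(s')$ lie on $\partial V_2$, and $\partial V_2\subseteq N_2$ because the other component of $\bR^n\setminus N_2$ is open. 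Now fix any $\varepsilon$ with $0<\varepsilon<\rho(N_1,N_2)$ (this distance is positive, $N_1,N_2$ being compact and disjoint). Then $U_{\varepsilon}(N_2)$ is a connected neighbourhood of $N_2$ disjoint from $N_1$, and since it is connected and meets $V_1$ it is contained in $V_1$; therefore the collar $C_{\varepsilon}=V_2\cap U_{\varepsilon}(N_2)$ lies in $V_1\cap V_2$, is open, and is connected by Corollary~\ref{corr_conectedness}, hence path-connected. By continuity of $\alpha$ at $s$ and $s'$, and because $\alpha(s),\alpha(s')\in N_2$, I can choose $u_1\in[0,s)$ and $u_2\in(s',1]$ with $\alpha(u_1),\alpha(u_2)\in C_{\varepsilon}$. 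Concatenating $\alpha|_{[0,u_1]}$ (which stays in $V_1\cap V_2$), a path inside $C_{\varepsilon}$ from $\alpha(u_1)$ to $\alpha(u_2)$, and $\alpha|_{[u_2,1]}$ (again in $V_1\cap V_2$) produces a path from $x$ to $y$ in $V_1\cap V_2$; thus $V_1\cap V_2$ is path-connected, and therefore connected.

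The main obstacle is producing a connected ``collar'' of $N_2$ lying on the $V_2$-side and still inside $V_1$, and this is exactly where the hypothesis that $N_1,N_2$ are closed hypersurfaces enters, through Corollary~\ref{corr_conectedness} together with the fact that a sufficiently thin tubular neighbourhood of the connected compact set $N_2$ is connected and misses $N_1$. After that the argument is routine; the only mild technical care needed is that $\alpha$ may hit $\bR^n\setminus V_2$ on a complicated closed set rather than a single subinterval, which is why one works with the extreme parameters $s$ and $s'$, only the behaviour of $\alpha$ just before $s$ and just after $s'$ being relevant.
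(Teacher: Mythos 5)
There is a genuine gap: you assume that $N_2$ is connected, writing ``$N_2$ is connected (it is a closed hypersurface)'', but in this paper a closed hypersurface is only a smooth compact boundaryless submanifold of dimension $n-1$ and may well be disconnected --- the adjective ``connected'' is added explicitly whenever it is needed (e.g.\ in Corollary~\ref{corr_conectedness}), and the paper's own proof of Lemma~\ref{lemma_complements_intersection} begins precisely by listing the components $N_2^{1},\ldots,N_2^{m}$ of $N_2$. Your argument breaks at several points once $N_2$ has more than one component: the initial reduction fails, because the components of $N_2$ may be distributed among different components of $\bR^n\setminus N_1$ (take $N_1$ a sphere and $N_2$ one sphere inside it and one outside); the set $U_{\varepsilon}(N_2)$ need no longer be connected, so it need not lie in $V_1$; and Corollary~\ref{corr_conectedness} cannot be applied to $V_2\cap U_{\varepsilon}(N_2)$, since its hypothesis requires a connected hypersurface --- indeed $\partial V_2$ may consist of several components of $N_2$, and the collar then splits into several pieces, so a single detour through it does not exist. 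The repair is essentially what the paper does: follow your path $\alpha$ and, for each component $N_2^{j}$ of $N_2$ that it meets, replace the portion of $\alpha$ between the first and last parameters at which it touches $N_2^{j}$ by a path inside the collar $W_j\cap U_{\varepsilon}(N_2^{j})$, where $W_j$ is the component of $\bR^n\setminus N_2^{j}$ containing $V_2$ and $\varepsilon$ is small enough that all these collars avoid $N_1$ and each other; Corollary~\ref{corr_conectedness} applies to each $N_2^{j}$ separately, and a finite induction over the (at most $m$) components met by $\alpha$ produces the required connected set avoiding $N_1\cup N_2$.

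On the positive side, in the special case where $N_2$ is connected your argument is correct and is a cleaner, path-based version of the paper's construction (your collar $C_{\varepsilon}$, obtained from Corollary~\ref{corr_conectedness}, is exactly the paper's $W_{i,\varepsilon}$, and your choice of the extreme parameters $s,s'$ replaces the paper's bookkeeping with the $\tau_i',\tau_i''$). Moreover, it also allows $N_1$ to be disconnected, since connectedness of $N_1$ is never really used, and by the symmetry of the statement this special case would in fact suffice for the later application in Corollary~\ref{corr_component}, where one of the two hypersurfaces in each pair is a single component. But as a proof of the lemma as stated, with both $N_1$ and $N_2$ arbitrary closed hypersurfaces, it is incomplete.
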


\begin{proof}
Suppose $x_1$, $x_2 \in V_1 \cap V_2$. Let us verify that the set $V_1 \cap V_2$ contains a connected subset which includes $x_1$ and $x_2$.

It is known that open connected subsets of $\bR^n$ are arcwise connected. So, $V_1$ and $V_2$ are arcwise connected sets.

Let $\gamma : I \to \bR^n$ be a continuous path which connects $x_1$ to $x_2$ in $V_1$, i. e. $\gamma(0) = x_1$, $\gamma(1) = x_2$ and $\gamma(I) \subset V_1$.

Since $N_2$ is a closed hypersurface (i. e. compact and borderless), it has a finite number of connected components. Let us designate them by $N_2^{1}, \ldots, N_2^{m}$.

Denote $\tau_1' = \inf \{ t \in I \,|\, \gamma(t) \in N_2 \}$. Since $N_2$ is closed in $\bR^n$, we have that $\gamma(\tau_1') \in N_2$. Moreover $\tau_1' > 0$, as $x_1 = \gamma(0) \in V_1 \cap V_2$. Let $\gamma(\tau_1') \in N_2^{\sigma(1)}$. Write $\tau_1'' = \sup \{ t \in I \,|\, \gamma(t) \in N_2^{\sigma(1)} \}$. Then
\begin{itemize}
	\item $\gamma(\tau_1'') \in N_2^{\sigma(1)}$,
	\item $\tau_1'' < 1$, since $x_2 = \gamma(1) \in V_1 \cap V_2$,
	\item $\gamma(t) \notin N_2^{\sigma(1)}$ for all $t > \tau_1''$.
\end{itemize}

If $\gamma(\tau_1'', 1] \cap N_2 \neq \emptyset$, there exists $\tau_2' = \inf \{ t > \tau_1'' \,|\, \gamma(t) \in N_2 \}$. Observe that $\tau_2' > \tau_1''$. Indeed, the compacts $N_2^{\sigma(1)}$ and $N \setminus N_2^{\sigma(1)}$ have disjoint neighbourhoods, therefore there exists $\varepsilon > 0$, such that $\gamma(t) \in N_2^{\sigma(1)}$ as soon as correlations $t \in \gamma^{-1}(N_2)$ and $|t - \tau_1''| < \varepsilon$ are fulfilled.

As above it is verified that $\gamma(\tau_2') \in N_2^{\sigma(2)}$ for a certain $\sigma(2) \in \{1, \ldots, m\}$. By definition we have $\gamma(\tau_1'', \tau_2') \cap N_2 = \emptyset$.

Denote $\tau_2'' = \sup \{ t \in I \,|\, \gamma(t) \in N_2^{\sigma(2)} \}$. Then
\begin{itemize}
	\item $\gamma(\tau_2'') \in N_2^{\sigma(2)}$,
	\item $\tau_2'' < 1$,
	\item $\gamma(t) \notin N_2^{\sigma(1)} \cup N_2^{\sigma(2)}$ for every $t > \tau_2''$.
\end{itemize}

Suppose that we have already constructed numbers
\begin{equation}\label{eq_parameters}
0 < \tau_1' \leq \tau_1'' < \tau_2' \leq \tau_2'' < \cdots < \tau_k' \leq \tau_k'' < 1 \,,
\end{equation}
such that
\begin{itemize}
	\item[(a)] $\bigl( \gamma[0, \tau_1') \cup \gamma(\tau_1'', \tau_{2}') \cup \cdots \cup \gamma(\tau_{k-1}'', \tau_{k}') \bigr) \cap N_2 = \emptyset$;
	\item[(b)] $\gamma(\tau_i')$, $\gamma(\tau_i'') \in N_2^{\sigma(i)}$, $i \in \{1, \ldots, k\}$;
	\item[(c)] $\gamma(t) \notin N_2^{\sigma(1)} \cup \cdots \cup N_2^{\sigma(i)}$ when $t > \tau_i''$, $i \in \{1, \ldots, k\}$.
\end{itemize}
Let $\gamma(\tau_k'', 1] \cap N_2 \neq \emptyset$. Designate $\tau_{k+1}' = \inf \{ t \in (\tau_k'', 1] \,|\, \gamma(t) \in N_2 \}$.

Since $\gamma(\tau_k'') \in N_2^{\sigma(k)}$ and compacts
\[
\bigcup_{i = 1}^k N_2^{\sigma(i)} \quad \text{and} \quad N_2 \setminus \Bigl( \bigcup_{i = 1}^k N_2^{\sigma(i)} \Bigr)
\]
do not intersect, we obtain that $\tau_{k+1}' > \tau_k''$. It is also clear that $\gamma(\tau_k'', \tau_{k+1}') \cap N_2 = \emptyset$.

As the set $N_2$ is compact, we get that $\gamma(\tau_{k+1}') \in N_2$. So there exists $\sigma(k+1)$, such that $\gamma(\tau_{k+1}') \in N_2^{\sigma(k+1)}$. Denote $\tau_{k+1}'' = \sup \{ t \in I \,|\, \gamma(t) \in N_2^{\sigma(k+1)} \}$. Then
\begin{itemize}
	\item $\gamma(\tau_{k+1}'') \in N_2^{\sigma(k+1)}$;
	\item $\tau_{k+1}'' < 1$;
	\item $\gamma(t) \notin N_2^{\sigma(1)} \cup \cdots \cup N_2^{\sigma(i)}$ whenever $t > \tau_i''$, $i \in \{1, \ldots, k+1\}$.
\end{itemize}
Consequently, the sequence
\[
0 < \tau_1' \leq \tau_1'' < \tau_2' \leq \tau_2'' < \cdots < \tau_{k+1}' \leq \tau_{k+1}'' < 1
\]
complies with properties which are similar to (a)--(c).

Observe that it follows from (b) and (c) that all numbers $\sigma(i)$ are distinct, $\sigma(i) \in \{1, \ldots, m\}$, $i \in \{1, \ldots, k\}$. Therefore, if the sequence~\eqref{eq_parameters} complies with the properties (a)--(c), then $k \leq m$. Consequently, there exists $k \leq m$, such that if the sequence~\eqref{eq_parameters} satisfies to (a)--(c), then it also meets the following property
\[
\gamma(\tau_k'', 1] \cap N_2 = \emptyset \,.
\]

As a matter of convenience we reindex connected components of $N_2$ in order to satisfy equalities $\sigma(i) = i$, $i \in \{1, \ldots, k\}$. Then the sequence~\eqref{eq_parameters} meets the following properties:
\begin{itemize}
	\item[(a$'$)] $\bigl( \gamma[0, \tau_1') \cup \gamma(\tau_1'', \tau_{2}') \cup \cdots \cup \gamma(\tau_{k-1}'', \tau_{k}') \cup \gamma(\tau_k'', 1] \bigr) \cap N_2 = \emptyset$;
	\item[(b$'$)] $\gamma(\tau_i')$, $\gamma(\tau_i'') \in N_2^i$, $i \in \{1, \ldots, k\}$;
	\item[(c$'$)] $\gamma(t) \notin N_2^{1} \cup \cdots \cup N_2^{i}$ when $t > \tau_i''$, $i \in \{1, \ldots, k\}$.
\end{itemize}

Notice that all the sets $N_1$, $N_2^1, \ldots, N_2^m$ are disjoint compacts. So there exists an $\varepsilon > 0$ complying with the following equalities:
\begin{align*}
U_{\varepsilon}(N_1) \cap U_{\varepsilon}(N_2^i) & = \emptyset\,, \quad i \in \{1, \ldots, m\} \,; \\
U_{\varepsilon}(N_2^i) \cap U_{\varepsilon}(N_2^j) & = \emptyset\,, \quad i \neq j\,,\quad i, j \in \{1, \ldots, m\} \,.
\end{align*}

Let $W_i$, $i \in \{1, \ldots, m\}$, be a connected component of $\bR^n \setminus N_2^i$, such that $W_i \cap V_2 \neq \emptyset$. It is easy to see that $V_2 \subseteq W_i$ for every $i$. Thus, the component $W_i$ is uniquely determined for each $i$, and moreover $x_1, x_2 \in W_i$, $i \in \{1, \ldots, m\}$.

Observe that the sets
\begin{align*}
K_i' & = \gamma[0, \tau_1') \cup \Bigl( \bigcup_{j < i} N_2^j \Bigr) \cup \Bigl( \bigcup_{2 \leq j \leq i} \gamma(\tau_{j-1}'', \tau_j') \Bigr) \,, \\
K_i'' & = \gamma(\tau_i'', 1]
\end{align*}
are connected for all $i \in \{1, \ldots, m\}$, since all sets $N_2^j$ are so and conditions (b$'$) are fulfilled.

It is also true that $K_i' \cup K_i'' \subset W_i$, $i \in \{1, \ldots, k\}$. In fact, on one hand it follows from (a$'$) that $K_i' \cap N_2^i = \emptyset$, and (c$'$) implies $K_i'' \cap N_2^i = \emptyset$; on the other hand, $x_1 = \gamma(0) \in K_i' \cap W_i$ and $x_2 = \gamma(1) \in K_i'' \cap W_i$.

It follows from what has been said that
\[
\gamma(\tau_{i-1}'', \tau_i') \cup \gamma(\tau_i'', \tau_{i+1}') \subset W_i \,, \quad i \in \{1, \ldots, k\} \,.
\]
Together with the condition (b$'$) this results in the inequalities
\begin{equation}\label{eq_intervals}
\gamma(\tau_{i-1}'', \tau_i') \cap W_{i, \varepsilon} \neq \emptyset \,, \quad
\gamma(\tau_{i}'', \tau_{i+1}') \cap W_{i, \varepsilon} \neq \emptyset \,, \quad
i \in \{1, \ldots, k\} \,.
\end{equation}
We designated here $\tau_0'' = 0$, $\tau_{k+1}' = 1$, $W_{i, \varepsilon} = W_i \cap U_{\varepsilon}(N_2^i)$, $i \in \{1, \ldots, k\}$.

Now Corollary~\ref{corr_conectedness} and correlations~\eqref{eq_intervals} imply that the set
\[
K = \gamma[0, \tau_1') \cup W_{1, \varepsilon} \cup \gamma(\tau_1'', \tau_2') \cup \ldots \cup \gamma(\tau_{k-i}'', \tau_k') \cup W_{k, \varepsilon} \cup \gamma(\tau_k'', 1]
\]
is connected. Indeed, all sets in this union are connected, and from~\eqref{eq_intervals} it follows that every two adjacent sets in this chain have a common point.

In addition, by virtue of choice of $\varepsilon > 0$ we have
\[
W_{i, \varepsilon} \cap (N_1 \cup N_2) = \emptyset \,, \quad i \in \{1, \ldots, m\} \,.
\]
Therefore, it follows from the choice of $\gamma$ and from conditions (a$'$) that $K \cap (N_1 \cup N_2) = \emptyset$.

So, we have constructed a connected set $K$, which contains points $x_1 = \gamma(0)$ and $x_2 = \gamma(1)$ and does not intersect surfaces $N_1$ and $N_2$.

>From the arbitrariness in the choice of points $x_1, x_2 \in V_1 \cap V_2$ we conclude that the set $V_1 \cap V_2$ is connected.
\end{proof}

\begin{corr}\label{corr_intersection}
Under the condition of Lemma~\ref{lemma_complements_intersection} the set $V_1 \cap V_2$ is the connected component of the complement $\bR^n \setminus (N_1 \cup N_2)$.
\end{corr}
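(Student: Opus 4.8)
The plan is to combine Lemma~\ref{lemma_complements_intersection} with the elementary fact that every connected subset of a topological space lies inside exactly one of its connected components. Since $N_1$ and $N_2$ are closed in $\bR^n$, the set $\bR^n \setminus (N_1 \cup N_2)$ is open, and $V_1 \cap V_2$ is an open subset of it, being the intersection of the two open sets $V_1$ and $V_2$. By Lemma~\ref{lemma_complements_intersection} this set is connected, so it remains only to check that it is \emph{maximal} among connected subsets of $\bR^n \setminus (N_1 \cup N_2)$; if $V_1 \cap V_2 = \emptyset$ there is nothing to prove, so assume it is nonempty and fix a point $x \in V_1 \cap V_2$.

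Next I would let $C$ denote the connected component of $\bR^n \setminus (N_1 \cup N_2)$ that contains $x$. On the one hand, $V_1 \cap V_2$ is a connected subset of $\bR^n \setminus (N_1 \cup N_2)$ containing $x$, so by maximality of components $V_1 \cap V_2 \subseteq C$. On the other hand, $C$ is a connected subset of $\bR^n \setminus N_1$, hence it is contained in a single connected component of $\bR^n \setminus N_1$; since $x \in C$ and $x \in V_1$, that component is $V_1$, i.e. $C \subseteq V_1$. The same argument with $N_2$ in place of $N_1$ gives $C \subseteq V_2$, so $C \subseteq V_1 \cap V_2$. Combining the two inclusions yields $C = V_1 \cap V_2$, which shows that $V_1 \cap V_2$ is precisely the connected component of $\bR^n \setminus (N_1 \cup N_2)$ through $x$, as claimed.

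I expect no real obstacle here: all the topological difficulty has already been absorbed into Lemma~\ref{lemma_complements_intersection}, and the present statement is a short formal deduction from it together with the partition property of connected components. The only point that requires a word of care is the degenerate possibility $V_1 \cap V_2 = \emptyset$, which can in principle occur if the chosen components happen to be disjoint; in the situation where the corollary is meant to be applied — nested hypersurfaces, where one of the two regions sits inside the other — this does not happen, and in any case the assertion is then read vacuously.
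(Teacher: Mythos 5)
Your proof is correct and follows essentially the same route as the paper: both rest on Lemma~\ref{lemma_complements_intersection} for the connectedness of $V_1 \cap V_2$ and then on the maximality of $V_1$ and $V_2$ as components of $\bR^n \setminus N_1$ and $\bR^n \setminus N_2$ to pin down the component of $\bR^n \setminus (N_1 \cup N_2)$. The only cosmetic difference is that you argue directly (the component $C$ through $x$ is a connected subset of $\bR^n \setminus N_i$ meeting $V_i$, hence lies in $V_i$), whereas the paper derives the same inclusions by contradiction via the connected set $W \cup V_1$; your remark on the degenerate case $V_1 \cap V_2 = \emptyset$ is a reasonable extra precaution that the paper leaves implicit.
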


\begin{proof}
By Lemma~\ref{lemma_complements_intersection} the set $V_1 \cap V_2$ is connected. Moreover, it is easy to see from condition of Lemma~\ref{lemma_complements_intersection} that this set does not intersect $N_1 \cup N_2$. Therefore, there is a component $W$ of the complement $\bR^n \setminus (N_1 \cup N_2)$ which contains $V_1 \cap V_2$.

Suppose that Corollary is invalid. Then $W \neq (V_1 \cap V_2)$ and there exists $x \in W \cap ((\bR^n \setminus V_1) \cup (\bR^n \setminus V_2))$.

Let $x \in (\bR^n \setminus V_1)$. Since $W \cap V_1 \supset W \cap (V_1 \cap V_2) = V_1 \cap V_2 \neq \emptyset$, the set $W \cup V_1$ is connected. By construction $W \cap N_1 = \emptyset$, so $W \cup V_1 \subset (\bR^n \setminus N_1)$. In this case $(W \cup V_1) \supsetneq V_1$, as $x \in W \setminus V_1$ by our hypothesis. We obtain a contradiction to the condition of Lemma~\ref{lemma_complements_intersection} which says that $V_1$ is the connected component of the complement $\bR^n \setminus N_1$. Consequently, our supposition is false and $x \in V_1$.

The inclusion $x \in V_2$ is proved similarly.

Therefore, $W = V_1 \cap V_2$. Corollary is proved.
\end{proof}

\begin{corr}\label{corr_component}
Let $x_0 \in \bR^n$. Let $N$ be a closed hypersurface in $\bR^n$ and $W$ be the component of the complement $\bR^n \setminus N$, such that $x_0 \in W$. Suppose the set $W$ is bounded.

Then there exists a connected component $N_0$ of $N$, such that the connected component $W_0$ of $\bR^n \setminus N_0$ which contains $x_0$ is bounded.
\end{corr}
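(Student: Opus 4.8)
The plan is to reduce the statement to the case of a single connected hypersurface by splitting $N$ into connected components and, for each one, recording which component of its complement contains $x_0$. Since a closed hypersurface is compact and locally connected, $N$ has finitely many connected components; write $N = N^1 \cup \dots \cup N^m$ with the $N^i$ connected, closed, and pairwise disjoint. Note that $m \geq 1$, because otherwise $\bR^n \setminus N = \bR^n$ would be unbounded, contrary to the hypothesis on $W$. Each $N^i$ is a connected closed hypersurface, so (by the remark following Definition~\ref{defn_surf}) the complement $\bR^n \setminus N^i$ has exactly two components, one bounded --- its internal component --- and one unbounded; let $V_i$ denote the one containing $x_0$.

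The crucial step is to show that $W = V_1 \cap \dots \cap V_m$, which I would prove by induction on $m$. The case $m = 1$ is immediate. For $m \geq 2$, set $N' = N^1 \cup \dots \cup N^{m-1}$; this is again a closed hypersurface, and it is disjoint from $N^m$. Let $W'$ be the connected component of $\bR^n \setminus N'$ containing $x_0$, so that $W' = V_1 \cap \dots \cap V_{m-1}$ by the inductive hypothesis. Since $x_0$ lies in both $W'$ and $V_m$, these sets meet, and Corollary~\ref{corr_intersection}, applied to the disjoint closed hypersurfaces $N'$ and $N^m$ together with their components $W'$ and $V_m$, gives that $W' \cap V_m$ is the connected component of $\bR^n \setminus (N' \cup N^m) = \bR^n \setminus N$ containing $x_0$; that is, $W' \cap V_m = W$. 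Hence $W = \bigcap_{i=1}^m V_i$.

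To conclude, I would argue by contradiction that some $V_{i_0}$ is the bounded component of $\bR^n \setminus N^{i_0}$. If every $V_i$ were the unbounded component, then each $\bR^n \setminus V_i$ --- being the union of $N^i$ with the bounded component of $\bR^n \setminus N^i$ --- would be bounded, whence $\bR^n \setminus W = \bigcup_{i=1}^m (\bR^n \setminus V_i)$ would be bounded as well; but then $W$ would contain every point of sufficiently large norm and could not be bounded, a contradiction. Choosing $i_0$ with $V_{i_0}$ bounded and putting $N_0 = N^{i_0}$, $W_0 = V_{i_0}$ then finishes the proof: by construction $W_0$ is the connected component of $\bR^n \setminus N_0$ containing $x_0$, and it is bounded. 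The argument is essentially bookkeeping; the only point that needs care is the inductive application of Corollary~\ref{corr_intersection}, where one must verify that $N^1 \cup \dots \cup N^{m-1}$ is again a closed hypersurface disjoint from $N^m$ and that the intersections involved are nonempty (guaranteed by $x_0 \in V_i$ for every $i$). I do not anticipate a genuine obstacle.
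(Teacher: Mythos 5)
Your proposal is correct and takes essentially the same approach as the paper: decompose $N$ into its finitely many connected components, consider for each component the piece of its complement containing $x_0$, and apply Lemma~\ref{lemma_complements_intersection}/Corollary~\ref{corr_intersection} iteratively to relate $W$ to $\bigcap_i V_i$. The only cosmetic difference is the endgame --- you establish the exact equality $W=\bigcap_i V_i$ and conclude by a De Morgan boundedness argument, while the paper uses only the connectedness of the intersection and obtains the contradiction via a large sphere enclosing $N$; the substance is the same.
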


\begin{proof}
Since $N$ is compact, and so bounded in $\bR^n$, we can assume without loss of generality that $N$ is contained in the unit ball $B$ which has its center at the origin.

Denote by $S$ the unit sphere $\Fr B$. It is clear that $S \subset \bR^n \setminus N$. Boundedness of $W$ means that $x_0$ and the connected set $S$  belong to distinct components of the complement $\bR^n \setminus N$.

Let $N_1, \ldots, N_m$ be the connected components of $N$. Let $W_i$ be the component of the complement $\bR^n \setminus N_i$, such that $x_0 \in W_i$, $i \in \{1, \ldots, m\}$.

Suppose that the conclusion of the Corollary is false. This is equivalent to the claim that $S \subset W_i$ for all $i \in \{1, \ldots, m\}$. By the sequential application of Lemma~\ref{lemma_complements_intersection} and of Corollary~\ref{corr_intersection} to the pairs of sets
\[
N'(i) = \bigcup_{j=1}^i N_j \,, \quad N''(i) = N_{i+1} \,, \quad i \in \{1, \ldots, m-1\} \,,
\]
we verify that the set $W_1 \cap \cdots \cap W_m \subset \bR^n \setminus N$ is connected. Therefore, both $x_0$ and $S$ are contained in the same connected component of the complement $\bR^n \setminus N$. But this contradicts to the condition of Corollary.
\end{proof}

\subsection{Proof of Theorem~\ref{th_surf_family}}

Without loss of generality we can assume that $F(x_0) = 0$.

We shall use the following designations throughout the proof:
\[
B_{\delta} = \{ y \in \bR^n \,|\, \rho(y, x_0) \leq \delta \}\,.
\]
\[
\mathring{B}_{\delta} = \Int(B_{\delta}) = \{ y \in \bR^n \,|\, \rho(y, x_0) < \delta \}\,.
\]

\medskip

{\bf Necessity.}
Let $F$ be an $\bf {L}$-function for the point $x_0$.

Let us select a sequence $\{a_i\}_{i \in \bN}$ of regular values of $F$ and a sequence $\{ H_{i}^{n-1} \subset  F^{-1}(a_i)\}_{i \in \bN}$ of connected components of level sets of $F$ such that they comply with Definition~\ref{defn_L_function}.

Denote by $C$ the connected component of $F^{-1}(0)$ which contains $x_0$.

Suppose that $C \neq \{x_0\}$ contrary to the statement of Theorem. Then there exists $x_1 \in C$, $x_1 \neq x_0$. Denote $\delta = \rho(x_0, x_1)/2$.

Since the sequence $\{H_{i}^{n-1}\}$ converges to $x_0$, there is an index $M \in \bN$ such that $H_M^{n-1} \subset B_\delta$.

On one hand, by definition the point $x_0$ is contained in the bounded component of the complement $\bR^n \setminus H_M^{n-1}$. However, it is straightforward that $x_1$ is contained in the unbounded component of this complement by the choice of $H_M^{n-1}$. Consequently, the connected set $C$ must intersect $H_M^{n-1}$.

It easily follows from the relation $H_M^{n-1} \cap C \neq \emptyset$ that $H_M^{n-1} \cup C \subset F^{-1}(0)$. The set $H_M^{n-1} \cup C$ is connected and does not coincide with $H_M^{n-1}$ since $\{x_0, x_1\} \subset C \setminus H_M^{n-1}$. This contradicts to the choice of $H_M^{n-1}$ being the connected component of the corresponding level set of $F$.

The contradiction obtained proves that $C = \{x_0\}$ and $x_0$ is the quasi-isolated point of $F$.

\medskip

{\bf Sufficiency.}
Let $x_0$ be a quasi-isolated point of $F$.

It follows from Sard's  Theorem~\cite{Sard} that the set of regular values of $F$ is residual and everywhere dense. So there exists a decreasing sequence of positive real numbers $\{\varepsilon_i\}_{i \in \bN}$, which complies with the following properties:
\begin{itemize}
	\item $\lim_{i \to \infty} \varepsilon_i = 0$;
	\item all numbers $\pm\varepsilon_i$ are regular values of $F$.
\end{itemize}

Denote by $U^i$ the connected component of the open set $Q^i = \{ x \in G \,|\, -\varepsilon_i < F(x) < \varepsilon_i \}$, such that $x_0 \in U^i$. Obviously, $U^j \subset U^i$ when $i < j$.

We fix $\delta > 0$ small enough to satisfy the inclusion $B_\delta \subset G$.
Let us denote by $U^i_{\delta}$ the component of $U^i \cap B_{\delta}$, such that $x_0 \in U^i_{\delta}$. Let also
\[
K^i_{\delta} = \Cl{U^i_{\delta}} \,, \quad i \in \bN \,.
\]
It obviously follows from the continuity of $F$ that
\[
K^i_{\delta} \subset \{ x \in G \,|\, -\varepsilon_i \leq F(x) \leq \varepsilon_i \}\,.
\]
We have also $U^j_{\delta} \subset U^i_{\delta}$ and $K^j_{\delta} \subset K^i_{\delta}$ for $i < j$ by construction.

Let us consider the set
\[
K_{\delta} = \bigcap_{i \in \bN} K^i_{\delta} \,.
\]

On one hand $x_0 \in K_{\delta} \subseteq F^{-1}(0)$, since $\varepsilon_i \to 0$ when $i \to \infty$.

On the other hand, $\{K^i_{\delta}\}$ is the sequence of embedded connected compacts. Hence, $K_{\delta}$ is connected.

Thus, from the condition of Theorem we conclude that $K_{\delta} = \{x_0\}$.

Then there exists an $m \in \bN$, such that $U^r_{\delta} \subset K^r_{\delta} \subset \mathring{B}_{\delta}$ for all $r \geq m$.

Recall that by construction the set $U^i$ is connected component of the open subset $Q^i$ of $G$.
The space $\bR^n$ is locally connected, therefore $U^i$ is open in $\bR^n$ (see~\cite{Kuratowski}). Consequently,  $U^i_{\delta}$ is open in $B_{\delta}$. In fact, the space $B_{\delta}$ is locally connected, as it is the homeomorphic image of closed disk. The set $U^i_{\delta}$ is a connected component of $U^i \cap B_{\delta}$, so $U^i \cap B_{\delta}$ is open in $B_{\delta}$.

In this way, if the inclusion $U^i_{\delta} \subset \mathring{B}_{\delta} = \Int(B_{\delta})$ is valid, then the set $U^i_{\delta}$ is open in $\bR^n$.

On the other hand, it is known (see~\cite{Kuratowski}), that if arbitrary sets $A$ and $B$ satisfy the inclusion $A \subset B \subset \Cl{A}$, then connectedness of $A$ results in the connectedness of $B$. So, the set $K^i_{\delta} \cap U^i$ is connected. Indeed, the set $U^i_{\delta}$ is connected and $U^i_{\delta} \subset K^i_{\delta} \cap U^i \subset \Cl{U^i_{\delta}} = K^i_{\delta}$.

Consequently, $K^i_{\delta} \cap U^i = U^i_{\delta}$, since $K^i_{\delta} \cap U^i \subseteq K^i_{\delta} \cap (U^i \cap B_{\delta})$ and $U^i_{\delta}$ is a connected component of $U^i \cap B_{\delta}$.

Thus, the compact $K^i_{\delta}$ complies with the correlation $U_i \setminus U^i_{\delta} = U_i \setminus K^i_{\delta}$, so the set $U_i \setminus U^i_{\delta}$ is open in $\bR^n$.

From what has been said we conclude that $U^r_{\delta} = U^r$ for $r \geq m$. Indeed, $U^r = U^r_{\delta} \sqcup (U^r \setminus U^r_{\delta})$, moreover in our case both the sets $U^r_{\delta} \ni x_0$ and $(U^r \setminus U^r_{\delta})$ are open. It now follows from the connectedness of $U^r$ that $(U^r \setminus K^r_{\delta}) = \emptyset$.

Let $r \geq m$.

Consider the set $N_r = \Fr U^r$.
The set $U^r$ is open, so $N_r \cap U^r = \emptyset$. On the other hand, $U^r$ is the connected component of $Q^r = \{ x \in M^n \,|\, -\varepsilon_r < F(x) < \varepsilon_r \}$, therefore $\Cl{U^r} \cap Q^r = K^r_{\delta} \cap Q^r = U^r$. It implies that $N_r \subseteq F^{-1}(-\varepsilon_r) \cup F^{-1}(\varepsilon_r)$.

Both $-\varepsilon_r$ and $\varepsilon_r$ are regular values of $F$. Hence by the implicit function theorem (see~\cite{PalisDiMelo})
the space $N_r$ is locally diffeomorphic to $\bR^{n-1}$ at every point.
Therefore, $N_r$ is the closed hypersurface in $\bR^n$.

Applying Corollary~\ref{corr_component} to $N_r$, we conclude that there exists a component $N_r^0$ of $N_r$, which complies with the following property: if a component $W_r$ of the complement $\bR^n \setminus N_r^0$ contains $x_0$, then $W_r$ is limited.

We have proved already that $N_r \subset K^r_{\delta} \subset \mathring{B}_{\delta}$, hence $W_r \subset \mathring{B}_{\delta}$ and $\Cl{W_r} \subset B_{\delta}$.

Let $B_{\delta_1} = B_{\delta}$. We already found $m_1 = m \in \bN$, such that there exists a component $N_{m_1}^0$ of the hypersurface $N_{m_1}$, which has the following property: if a component $W_{m_1}$ of the complement $\bR^n \setminus N_{m_1}^0$ contains $x_0$, then its closure $K_{m_1} = \Cl{W}_{m_1}$ belongs to $B_{\delta_1}$.

Let us assume now that for some $k \in \bN$ the following objects are already determined:
\begin{itemize}
	\item a sequence of positive numbers $\delta_1, \ldots, \delta_k$, such that $\delta_{i+1} < \delta_i/2$, $i \in \{1, \ldots, k-1\}$;
	\item a family $B_{\delta_1}, \ldots, B_{\delta_k}$ of neighbourhoods of $x_0$, $B_{\delta_i} = \{ y \in \bR^n \,|\, \rho(y, x_0) \leq \delta_i \} \subset G$, $i \in \{1, \ldots, k\}$;
	\item a sequence of numbers $m_1 < \cdots < m_k$;
	\item a collection of connected components $N_{m_i}^0$ of hypersurfaces\linebreak
	$N_{m_i} \subseteq F^{-1}(-\varepsilon_{m_i}) \cup F^{-1}(\varepsilon_{m_i})$, $i \in \{1, \ldots, k\}$;
	\item closures $K_{m_i}$ of components $W_{m_i}$ of the complements $M^n \setminus N_{m_i}^0$, $i \in \{1, \ldots, k\}$.
\end{itemize}
Assume that these objects are interconnected by the correlations
\begin{gather*}
B_{\delta_i} \supset K_{m_i} \supset W_{m_i} \ni x_0 \,, \quad i \in \{1, \ldots, k\} \,, \\
W_{m_i} \supset B_{\delta_{i+1}} \,, \quad i \in \{1, \ldots, k-1\} \,.
\end{gather*}

Let us fix $\delta_{k+1} \in (0, \delta_k/2)$, such that $B_{\delta_{k+1}} = \{ y \in \bR^n \,|\, \rho(y, x_0) \leq \delta_{k+1} \} \subset W_{m_k} \subset G$.

By repeating the argument above, we obtain $m_{k+1} > m_k$, such that $x_0 \in U^{m_{k+1}} \subset \Cl{U^{m_{k+1}}} \subset \Int B_{\delta_{k+1}}$. Then the set $N_{m_{k+1}} = \Fr U^{m_{k+1}}$ is a closed hypersurface in $\bR^n$ and $N_{m_{k+1}} \subset \Int B_{\delta_{k+1}}$. There exists also a component $N_{m_{k+1}}^0$ of this hypersurface, which comply with the following property: if a component $W_{m_{k+1}}$ of the complement $\bR^n \setminus N_{m_{k+1}}^0$ contains $x_0$, then its closure $K_{m_{k+1}}$ lies in $B_{\delta_{k+1}}$.

If we continue this process by induction, we will construct a sequence of closed connected hypersurfaces $\{ N_{m_{i}}^0,\; i \in \bN \}$, a sequence of components $\{ W_{m_i} \}$ of the sets $\bR^n \setminus N_{m_i}^0$, and a sequence of their closures $\{ K_{m_i} = \Cl{W_{m_i}} \}$, which are interconnected by the correlations
\begin{equation}\label{eq_seq_of_domains}
G \supset B_{\delta_i} \supset K_{m_i} \supset W_{m_i} \supset B_{\delta_{i+1}} \ni x_0 \,, \quad i \in \bN \,.
\end{equation}

All $B_{\delta_i}$ are closed $n$-disks, therefore all sets $K_{m_i}$ are compact. By construction hypersurfaces $N_{m_i}^0$ are boundaries of the sets $K_{m_i}$. It follows from relations~\eqref{eq_seq_of_domains} that $K_{m_i} \supset K_{m_j} \ni x_0$ when $i < j$, so $x_0 \in \bigcap_{i \in \bN} K_{m_{i}}$. It is obvious that $\lim_{i \to \infty} \delta_i = 0$.
Therefore, the family of sets $\{B_{\delta_i}\}$ forms the basis of neighbourhoods of the point $x_0$ in $\bR^n$. So, $\{x_0\} = \bigcap_{i \in \bN} K_{m_{i}}$. Finally, hypersurfaces $N_{m_i}^0$ are disjoint, since $N_{m_i}^0 \subset F^{-1}(-\varepsilon_{m_i}) \cup F^{-1}(\varepsilon_{m_i})$ and by construction $|\varepsilon_{m_i}| \neq |\varepsilon_{m_j}|$ for $i \neq j$.

Thus, the family $\{H_i^{n-1} = N_{m_i}^0\}_{i \in \bN}$ of connected closed hypersurfaces meets all conditions of
Definition~\ref{defn_L_function}.

\medskip

Theorem is proved.





\bibliographystyle{model1-num-names}
\bibliography{V-P-Sh-en-4}







\end{document}